\patchcmd\Gread@eps{\@inputcheck#1 }{\@inputcheck"#1"\relax}{}{}
\newtheorem{definition}{Definition}
\newtheorem{theorem}[definition]{Theorem}
\newtheorem{claim}{Claim}[definition]
\newtheorem{proposition}[definition]{Proposition}
\newtheorem{lemma}[definition]{Lemma}
\newtheorem{corollary}[definition]{Corollary}
\newcommand{\linesq}{\mathcal{L}_\mathcal{Q}}
\newcommand{\betq}{\mathcal{B}}
\newcommand{\qq}{\mathcal{Q}}
\newcommand{\ordx}{\prec_X}
\newcommand{\ordy}{\prec_Y}
\newcommand{\ordz}{\prec_Z}
\newcommand{\ordw}{\prec_W}
\newcommand{\ordv}{\prec_V}
\title{Quasimetric spaces with few lines}
\author{
Guillermo Gamboa Quintero \thanks{Computer Science Institute of Charles University, Prague, Czechia. Supported by GA\v{C}R grant 22-17398S. \textit{E-mail}: gamboa@iuuk.mff.cuni.cz} 
\and 
Mart\'{i}n Matamala \thanks{DIM-CMM, CNRS-IRL2807, Universidad de Chile, Chile. Supported by ANID Basal program FB210005.  \\ \textit{E-mail}: mar.mat.vas@dim.uchile.cl} 
\and 
Juan Pablo Pe\~{n}a \thanks{Departamento de Ingenier\'{i}a Matem\'{a}tica, Universidad de Chile, Chile. Supported by ANID Doctoral Fellowship grant 21211955. \textit{E-mail}: juan.pena@dim.uchile.cl}}
\date{}
\begin{document}
\maketitle

\begin{abstract}
Chen and Chvátal conjectured in 2008 that in any finite metric space either there is a line containing all the points –a universal line –, or the number of lines is at least the number of points. This is a generalization of a classical result due to Erd\H{o}s that says that a set of $n$ non-collinear points in the Euclidean plane defines at least $n$ different lines.

A line of a metric space with metric $\rho$ is defined in terms of a notion called the betweenness of the space which is the set of all triples $(x,z,y)$ such that $\rho(x,y)=\rho(x,z)+\rho(z,y)$.

In this work we prove that for each $n\geq 4$ there are $p_3(n)$ non isomorphic betweennesses arising from \emph{quasimetric} spaces with $n$ points, without universal lines and with exactly 3 lines, where $p_3(n)$ is the number of partitions of an integer $n$ into three parts.
We also prove that for $n\geq 5$, there are $2p_3(n-1)$ non isomorphic betweennesses arising from quasimetric spaces on $n$ points, without universal lines and with exactly 4 lines. Here two betweennesses are isomorphic if they are isomorphic as relational structures.

None of the betweennesses mentioned above is metric which implies that Chen and Chvátal's conjecture is valid for metric spaces with at most five points.

\end{abstract}

\section{Introduction}

The De Bruijn-Erd\H{o}s Theorem \cite{dbe} is an important result in combinatorics. It states that if $E$ is a collection of subsets of a ground set $V$ of $n$ points such that for all $e \in E$ it holds that $2 \leq \lvert e \rvert \leq n-1$ and for every pair $x,y \in V$ there is exactly one member $e \in E$ such that $x,y \in e$, then $\lvert E \rvert \geq n$. This theorem  captures a combinatorial property of the plane stating that the number of distinct lines defined by $n$ noncollinear points is at least $n$. A direct proof of this latter result was presented by Erd\H{o}s \cite{erdos} using the Sylvester-Gallai Theorem  \cite{sylvester,gallai} as part of an induction argument.

In \cite{chenchvatal}, Chen and Chv\'{a}tal generalized the notion of line to metric spaces in the following way: For a metric space $(V, \rho)$ and distinct points $x,y,z \in V$, we say $z$ is \textit{between} $x$ and $y$ if $\rho(x,y) = \rho(x,z) + \rho(z,y)$ and denoted by $[xzy]$. The \textit{line $\overline{xy}$ defined by $x$ and $y$} is the set of points $z \in V$ such that $[zxy], [xzy]$ or $[xyz]$ holds (In itself, this ternary relation called \textit{metric betweenness} was first studied by Menger \cite{menger} and, more recently by Chv\'{a}tal \cite{chvatal2004}). A \textit{universal} line is a line containing all the points of the space. Naturally, Chen and Chv\'{a}tal \cite{chenchvatal} presented the following question as a generalization of the De-Bruijn-Erd\H{o}s theorem for metric spaces: Is it true that every metric space $(V, \rho)$ on $n$ points with no universal line has at least $n$ distinct lines? This question is referred to as the Chen-Chv\'{a}tal conjecture and it remains open to this day, although several asymptotic lower bounds have been presented for the number of lines in metric spaces without universal lines. The initial lower bound for the number of lines in a metric space on $n$ points with no universal line was $\Omega( \log n )$ lines \cite{chenchvatal} and it was later improved to $\Omega(n^{2/7})$ lines in \cite{chichv}, then to $\Omega(\sqrt{n})$ lines in \cite{aboulker} and, most recently to $\Omega(n^{2/3})$ lines in \cite{huang}. The latter remains the best asymptotic lower bound, up to date. Other results concerning the Chen-Chv\'{a}tal conjecture for specific types of metric spaces can be found in \cite{abbemaza, abkha, beboch, bekharo, ch2014, kantor, jualrove}

While the Chen-Chv\'{a}tal conjecture is stated in the context of metric spaces, it is natural to extend it to objects that share similar properties with these. In \cite{aramat}, Araujo-Pardo and Matamala first considered the Chen-Chv\'{a}tal conjecture in the context of quasimetric spaces, which are spaces of points equipped with a function which has the same properties as a metric with the exception that it is not necessarily symmetric. The quasimetric spaces studied were those arising from tournaments and bipartite tournaments, and they were shown to satisfy the Chen-Chv\'{a}tal conjecture. Later on, Araujo-Pardo, Matamala and Zamora \cite{aramatzam} showed that there is a quasimetric space on four points with three lines, none of which are universal. The betweenness arising from such quasimetric space is such that any quasimetric space on four points without universal line and with exactly three lines defines a betweenness isomorphic to it, as relational structures. Moreover, this space is not that of a metric space.  As a consequence, the Chen-Chv\'{a}tal conjecture does not hold for quasimetric spaces, but does hold for metric spaces of four points. 

In this paper, we show that there is a substantial difference between quasimetric spaces and metric spaces when it comes to their lines. Recall that metric spaces on $n$ points with no universal line has $\Omega(n^{2/3})$ lines \cite{huang}. In constrast, we show that for every $n \geq 4$ there are quasimetric spaces on $n$ points with only $3$ lines, none of which are universal. Surprisingly, we prove that the betweennesses defined by these spaces are such that any quasimetric space on $n$ points without universal lines and with exactly three lines defines a betweenness isomorphic to one of them, as relational structures. 
We extend this result to the case of quasimetric spaces with exactly four lines, none of them being universal. We prove that for each $n\geq 5$ there are quasimetric spaces on $n$ points without universal lines and with exactly four lines. We can also prove that any quasimetric space on $n$ points with four lines, none of which are universal defines a betweenness which is isomorphic to one of them, as relational structures. The structure of this paper is the following: in section 2, we introduce some preliminary notation as well as some properties of quasimetric spaces with at most 4 non universal lines. In section 3, we present our construction for arbitrarily large quasimetric spaces with $k=3$ lines, none of which is universal, as well as presenting a classification of such spaces. Section 4 is analogous to Section 3 for $k=4$. Finally, we conclude with a discussion and possible questions to extend this work. 

\section{Preliminaries}

\begin{definition}
    A quasimetric space $\qq=(Q, \rho)$ is a duple, where $Q$ is a set of \textit{points} and $\rho : Q^2 \rightarrow [0, \infty)$ is a function called a \textit{quasimetric} satisfying
    \begin{itemize}
        \item $\rho(x,y) = 0 \Leftrightarrow x=y$.
        \item $\rho(x,y) \leq \rho(x,z) + \rho(z,y)$.
    \end{itemize}
    for all $x,y,z \in Q$. We say that $(Q, \rho)$ is of size $n$ if $\lvert Q \rvert=n$.
\end{definition}

Consider a quasimetric space $\qq=(Q, \rho)$ and let $x,y \in Q$. We define the \textit{segment} of $x$ and $y$, denoted by $[xy]$, as
$$[xy] := \{ z \in Q \  \vert \ \rho(x,y) = \rho(x,z) + \rho(z,y) \}.$$
Notice that $x,y\in [xy]$ as $\rho(x,x)=\rho(y,y)=0$. We define the line $\overrightarrow{xy}$ as
$$\overrightarrow{xy} := \{ z \in Q \ \vert \ x \in [zy] \vee z \in [xy] \vee y \in [xz] \}.$$
We denote by $\linesq$ the set of all lines of $\qq$. We will say that a line $\ell \in \linesq$ is \textit{universal} if $\ell = Q$. The following proposition gives us a lower bound on the number of lines in a quasimetric space with no universal line.

\begin{proposition}
    Any quasimetric space $\qq$ of size $n \geq 3$ with no universal line satisfies that $\lvert \linesq \rvert \geq 3$.
\end{proposition}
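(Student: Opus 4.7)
The plan is a short proof by contradiction, splitting into the cases $|\linesq| = 1$ and $|\linesq| = 2$ (the case $|\linesq|=0$ is impossible, since any ordered pair of distinct points already produces a line, and $n \geq 3$ guarantees such pairs exist). The single observation powering everything is that for any distinct $x,y \in Q$ one has $\{x,y\} \subseteq \overrightarrow{xy}$: taking $z = x$ gives $x \in [xy]$ because $\rho(x,x) + \rho(x,y) = \rho(x,y)$, and taking $z=y$ gives $y \in [xy]$ analogously.

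In the first case, if $\linesq = \{\ell\}$, then every ordered pair $(x,y)$ of distinct points of $Q$ satisfies $\overrightarrow{xy} = \ell$ and hence $\{x,y\} \subseteq \ell$. Since $n \geq 2$, every point of $Q$ belongs to some such pair, so $\ell = Q$, contradicting the absence of a universal line.

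In the second case, write $\linesq = \{\ell_1, \ell_2\}$. Since neither line is universal, I would pick $a \in Q \setminus \ell_1$ and $b \in Q \setminus \ell_2$. A one-line check rules out $a=b$: otherwise this common point would lie in no line of $\qq$ at all, yet taking any $c \neq a$ (available because $n \geq 3$) the line $\overrightarrow{ac}$ lies in $\linesq$ and contains $a$. With $a \neq b$ established, the line $\overrightarrow{ab}$ must equal $\ell_1$ or $\ell_2$ and must contain both $a$ and $b$; but $\ell_1 \not\ni a$ and $\ell_2 \not\ni b$, a contradiction. Hence $|\linesq| \geq 3$.

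I do not foresee any real obstacle: the whole argument is a pigeonhole between the two candidate lines that could cover the pair $(a,b)$. The only subtlety worth flagging is the asymmetry of the quasimetric — in general $\overrightarrow{xy}$ and $\overrightarrow{yx}$ need not coincide — but this plays no role here, since the proof only uses the existence of \emph{some} line through each pair of distinct points, and that holds for either orientation.
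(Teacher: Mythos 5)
Your proof is correct and uses essentially the same ingredients as the paper's: every ordered pair of distinct points lies on a line containing both endpoints, so with at most two lines and no universal line a pigeonhole contradiction follows. The only cosmetic difference is that the paper fixes a point $x\in\ell_1\setminus\ell_2$ and shows $\ell_1$ would have to be universal, whereas you pick $a\notin\ell_1$ and $b\notin\ell_2$ and show the pair $\{a,b\}$ cannot be covered; both are the same two-line pigeonhole.
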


\begin{proof}
    Since there is no universal line and every pair of points belongs to at least one line, there are at least two distinct lines. Let us denote them by $\ell_1$ and $\ell_2$. Assume for a contradiction that there are no other lines in $\linesq$. Without loss of generality, we may assume that there is a point $x \in \ell_1 \setminus \ell_2$. Then, for any $y \in Q$, we have that $\overrightarrow{xy} = \overrightarrow{yx} = \ell_1$ so $y \in \ell_1$. So, the line $\ell_1$ is universal, which is a contradiction.
\end{proof}

Let $\qq=(Q, \rho)$ be a quasimetric space. The \textit{betweenness} of $\qq$, denoted by $\betq_{\qq}$, is the relation defined as $$(x,y,z)\in \betq_{\qq} \iff x,y,z \text{ are all distinct and } \rho(x,y)+\rho(y,z)=\rho(x,z).$$ Such a relation was first studied by Menger \cite{menger} in the context of metric spaces. Moreover, Chv\'{a}tal \cite{chvatal2004} presented an infinite set of axioms describing the betweenness relation arising from metric spaces and also presented a polynomial time algorithm to determine whether a betweenness arises from a metric space. For simplicity, we write $xyz$ to refer to the triple $(x,y,z)$ and we write $\betq$, instead of $\betq_{\qq}$, when $\qq$ is clear from the context. Note that $\betq$ satisfies two important properties that we will use throughout this paper:
\begin{align}
    xyz \in \betq &\Rightarrow yxz, xzy \notin \betq, \label{prop:1} \\
    wxy, wyz \in \betq &\iff wxz, xyz \in \betq \label{prop:2},
\end{align}
for all $x,y,z,w \in \betq$. We say that a sequence $x_1 x_2 \ldots x_m$ of pairwise distinct points of $Q$ is a \textit{geodesic} if for any $1 \leq i < j < k \leq m$ it holds that $x_i x_j x_k \in \betq$ and we say the set $\{ x_1, \ldots, x_m \}$ is \textit{geodetic}. This is analogous to the definition of geodesic applied to metric spaces presented in \cite{aboulker}. A geodesic $x_1 x_2 \ldots x_m$ is \textit{maximal} if there is no point $w$ such that at least one of the sequences $wx_1 \ldots x_m$, $x_1 x_2 \ldots x_mw$ or $x_1 \ldots x_i w x_{i+1} \ldots x_m$ is a geodesic for any $1 \leq i \leq m$. %NONE is singular

\begin{lemma}
    \label{l:linearsystem}
    For any four pairwise distinct points $w,x,y,z \in Q$ the following holds:
\begin{align}
    wxy,ywz,zyx,xzw \in \betq &\Rightarrow wzy,yxz,zwx,xyw \in \betq, \label{prop:4}\\
    wxy, yzx,xwz,zyw \in \betq &\Rightarrow wzy, ywx,xyz,zxw\in \betq, \label{prop:5} \\
    wxy, yzw,xwz,zyx \in \betq &\Rightarrow wzy, yxw,xyz,zwx \in \betq. \label{prop:6}
\end{align}
\end{lemma}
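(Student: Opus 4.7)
The plan is to handle all three implications by the same linear-algebraic / triangle-inequality argument. For each implication, the recipe is: (i) translate each betweenness $xyz\in\betq$ into the distance equation $\rho(x,y)+\rho(y,z)=\rho(x,z)$; (ii) show that summing the four hypothesis equations gives the \emph{same} total identity on the twelve ordered distances as summing the four goal equations; (iii) observe that each of the four goal equations is an instance of the quasimetric triangle inequality \emph{read as an inequality} $\rho(x,z)\le\rho(x,y)+\rho(y,z)$; (iv) conclude that, since the sum of these four inequalities equals the identity obtained from the hypothesis, every individual inequality must be an equality, which is exactly what we need.

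More concretely, for implication \eqref{prop:4} I would label the twelve ordered distances among $w,x,y,z$ and write the hypothesis as the system
\[
\rho(w,x)+\rho(x,y)=\rho(w,y),\quad \rho(y,w)+\rho(w,z)=\rho(y,z),
\]
\[
\rho(z,y)+\rho(y,x)=\rho(z,x),\quad \rho(x,z)+\rho(z,w)=\rho(x,w).
\]
Adding, the left-hand side is $\rho(w,x)+\rho(x,y)+\rho(y,w)+\rho(w,z)+\rho(z,y)+\rho(y,x)+\rho(x,z)+\rho(z,w)$, while the right-hand side is $\rho(w,y)+\rho(y,z)+\rho(z,x)+\rho(x,w)$. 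The same pair of sums arises from the four goal equations $\rho(w,z)+\rho(z,y)=\rho(w,y)$, $\rho(y,x)+\rho(x,z)=\rho(y,z)$, $\rho(z,w)+\rho(w,x)=\rho(z,x)$, $\rho(x,y)+\rho(y,w)=\rho(x,w)$, as one checks by inspection. Since the four triangle inequalities $\rho(w,y)\le\rho(w,z)+\rho(z,y)$, $\rho(y,z)\le\rho(y,x)+\rho(x,z)$, $\rho(z,x)\le\rho(z,w)+\rho(w,x)$, $\rho(x,w)\le\rho(x,y)+\rho(y,w)$ hold and their sum equals the identity produced by the hypothesis, equality holds throughout.

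Implications \eqref{prop:5} and \eqref{prop:6} would be treated identically: in each case I would list the four hypothesis equations, the four goal equations, verify that the two sums of $8$ ordered distances and of $4$ ordered distances coincide as multisets, and then invoke the triangle inequality together with that numerical equality to force equality in each of the four goal triangle inequalities.

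The main obstacle is purely bookkeeping: one must correctly identify which eight ordered distances appear on the ``additive'' side and which four appear on the ``single-edge'' side in each of the three implications, and verify that these multisets coincide between hypothesis and goal. The cyclic pattern of the indices in \eqref{prop:4}--\eqref{prop:6} (each point plays each structural role exactly once in the list of four betweenness triples) is precisely what makes this bookkeeping work, so once set up carefully the argument is uniform. No other property of $\rho$ beyond the triangle inequality is used, which explains why the conclusion holds for arbitrary quasimetrics and not merely metric ones.
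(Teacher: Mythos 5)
Your proposal is correct and is essentially the paper's own argument: the paper likewise sums the four hypothesis equations into a single quantity $\Delta$, observes that the triangle inequality gives $\Delta \geq \rho(w,y)+\rho(y,z)+\rho(z,x)+\rho(x,w)$ via the goal groupings, and concludes equality must hold termwise, treating \eqref{prop:5} and \eqref{prop:6} as analogous. No gaps; the bookkeeping you describe matches the paper's display exactly.
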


\begin{proof} 
    It suffices to prove (\ref{prop:4}) since the arguments used to prove (\ref{prop:5}) and (\ref{prop:6}) are analogous. Assume that $wxy,ywz,zyx,xzw \in \betq$, so
    \begin{equation}
        \label{eq:deltaone}
        \begin{alignedat}{2}
            \rho(w,x) &+ \rho(x,y) &&= \rho(w,y),\\
            \rho(y,w) &+ \rho(w,z) &&= \rho(y,z),\\
            \rho(z,y) &+ \rho(y,x) &&= \rho(z,x),\\
            \rho(x,z) &+ \rho(z,w) &&= \rho(x,w).
        \end{alignedat}
    \end{equation}
    Let $\Delta := \rho(w,z )+ \rho(z,y) + \rho(y,x) + \rho(x,z) + \rho(z,w) + \rho(w,x) + \rho(x,y) + \rho(y,w)$. By the triangle inequality, we have that
    \begin{equation*}
        \Delta \geq \rho(w,y)+\rho(y,z)+\rho(z,x)+\rho(x,w).
    \end{equation*}
    Also, by (\ref{eq:deltaone}) it follows that
    \begin{equation*}
    \Delta = \rho(w,y)+\rho(y,z)+\rho(z,x)+\rho(x,w).  
    \end{equation*}
    This means that
    \begin{equation*}
        \begin{alignedat}{2}
            \rho(w,z) &+ \rho(z,y) &&= \rho(w,y), \\
            \rho(y,x) &+ \rho(x,z) &&= \rho(y,z), \\
            \rho(z,w) &+ \rho(w,x) &&= \rho(z,x), \\
            \rho(x,y) &+ \rho(y,w) &&= \rho(x,w)
        \end{alignedat}
    \end{equation*}
    which is equivalent to $wzy, yxz, zwx, xyw \in \betq$.
\end{proof}

Let $\betq_{\mathcal{P}}$ and $\betq_{\qq}$ be the betweennesses of two quasimetric spaces $\qq = (Q, \rho)$ and $\mathcal{P} = (P, \varrho)$, respectively. 
The betweennesses $\betq_{\qq}$ and $\betq_{\mathcal{P}}$ are isomorphic as relational structures, denoted as $\betq_{\qq} \cong \betq_{\mathcal{P}}$,  when there is a bijection $\varphi: Q \rightarrow P$ such that the function assigning to the triple $xyz$ the triple $\varphi(x)\varphi(y)\varphi(z)$ defines a bijection from $\betq_\qq$ to $\betq_{\mathcal{P}}.$ In this work, to ease the presentation, we shall just say that $\varphi$ is an isomorphism between $\betq_\qq$ and $\betq_{\mathcal{P}}$, and that $\betq_\qq$ and $\betq_{\mathcal{P}}$ are isomorphic.

In \cite{aramatzam}, Araujo-Pardo, Matamala and Zamora proved that the betweennesses of any quasimetric spaces on four points with exactly three lines (none of which is universal) are the same up to isomorphism. In Sections \ref{s:three} and \ref{s:four} we expand this result for any quasimetric space without universal lines with three and four lines, respectively.

For a subset $S$ of $Q$ we denote by $\betq (S)$ the subset of $\betq$ composed by all the triples $xyz$ such that $\{x,y,z\}\subseteq S$, that is, $$\betq (S) = \{ xyz \mid x,y,z \in S \text{ and } xyz \in \betq \}.$$ A triple $x,y,z \in Q$ is \textit{collinear} if $\betq ( \{ x,y,z \} ) \neq \emptyset$. Note that $$x,y,z \text{ are collinear } \Leftrightarrow x \in \overrightarrow{yz} \cup \overrightarrow{zy} \Leftrightarrow y \in \overrightarrow{xz} \cup \overrightarrow{zx} \Leftrightarrow z \in \overrightarrow{xy} \cup \overrightarrow{yx}.$$ Also, we say that points $x,y \in Q$ are \textit{symmetric} if $\overrightarrow{xy} = \overrightarrow{yx}$ and we denote the line defined by two such symmetric points as $\overline{xy}$. A set $S$ of points is \emph{symmetric} if any two points of the set are pairwise symmetric and there is a line $\ell$ such that $\overline{xy} = \ell$, for all $x,y \in S$. Finally, we say a line $\ell$ is \textit{symmetric} if there are symmetric points $x,y \in \ell$ such that $\ell = \overline{xy}$.

For two symmetric points $x$ and $y$ we have that $z \notin \overline{xy}$ if and only if the triple $x,y,z$ is not collinear. Our next result proves that in a quasimetric space with at most four lines, none of which is universal, three distinct points define a collinear triple whenever at least two of them are nonsymmetric.

\begin{lemma}\label{l:covering}
    Let $x,y\in Q$ such that $\overrightarrow{xy}\neq \overrightarrow{yx}$. If $Q\neq \overrightarrow{xy}\cup \overrightarrow{yx}$, then $\qq$ has a universal line or $\lvert \linesq \rvert \geq 5$.
\end{lemma}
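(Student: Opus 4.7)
The plan is to prove the contrapositive: assume $\qq$ has no universal line and $|\linesq|\le 4$, and derive a contradiction. First, I would pin down what the four lines must be. Because $w\notin\overrightarrow{xy}\cup\overrightarrow{yx}$, no ordering of $\{x,y,w\}$ lies in $\betq$; consequently each of $\overrightarrow{xw},\overrightarrow{wx}$ misses $y$, each of $\overrightarrow{yw},\overrightarrow{wy}$ misses $x$, and none of these four lines coincides with $\overrightarrow{xy}$ or $\overrightarrow{yx}$, nor does a line from one $w$-class agree with a line from the other. So $\linesq$ decomposes into three pairwise disjoint families $\{\overrightarrow{xy},\overrightarrow{yx}\}$, $\{\overrightarrow{xw},\overrightarrow{wx}\}$, $\{\overrightarrow{yw},\overrightarrow{wy}\}$. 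The first already contributes two distinct lines by hypothesis, so the bound $|\linesq|\le 4$ forces each $w$-family to collapse into a single symmetric line, yielding $\linesq=\{\overrightarrow{xy},\overrightarrow{yx},\overline{xw},\overline{yw}\}$.

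Second, I would exploit $\overrightarrow{xy}\neq\overrightarrow{yx}$ by picking $u\in\overrightarrow{xy}\setminus\overrightarrow{yx}$, necessarily distinct from $x,y,w$. The lines $\overrightarrow{uw},\overrightarrow{wu}$ pass through $w$ and hence lie in $\{\overline{xw},\overline{yw}\}$, so $u\in\overline{xw}\cup\overline{yw}$; after swapping the roles of $x$ and $y$ if necessary, I may assume $u\in\overline{xw}$. Now $u\in\overrightarrow{xy}$ is witnessed by one of $uxy,xuy,xyu\in\betq$, while $u\in\overline{xw}=\overrightarrow{xw}=\overrightarrow{wx}$ supplies a witness in each of $\{uxw,xuw,xwu\}$ and $\{uwx,wux,wxu\}$. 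For every admissible combination of such witnesses I would substitute appropriately in property~(\ref{prop:2}) (the equivalence $abc\wedge acd\iff abd\wedge bcd$) at variables drawn from $\{u,x,y,w\}$, with the goal of forcing an ordering of $\{x,y,w\}$ into $\betq$—contradicting the non-collinearity of $\{x,y,w\}$.

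The main obstacle is the case analysis in this final step. After (\ref{prop:1}) eliminates the witness pairs for $\{u,x,w\}$ that cannot coexist, roughly a dozen subcases remain, and for each one must identify the substitution in (\ref{prop:2}) that exposes a forbidden $\{x,y,w\}$-triple; in the more resistant subcases this likely requires chaining two applications of (\ref{prop:2}) or invoking the four-point identities of Lemma~\ref{l:linearsystem}. Verifying that every admissible combination of witnesses really does close out—leaving no escape subcase in which the configuration could survive by generating a new fifth line instead of a forbidden triple—is the delicate technical heart of the argument.
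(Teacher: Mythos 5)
Your first step coincides with the paper's: from $w\notin\overrightarrow{xy}\cup\overrightarrow{yx}$ you correctly force $\{w,x\}$ and $\{w,y\}$ to be symmetric pairs and conclude $\linesq=\{\overrightarrow{xy},\overrightarrow{yx},\overline{xw},\overline{yw}\}$ (you should also record that $Q=\overline{xw}\cup\overline{yw}$, since any $z$ outside both would make $\overrightarrow{zw}$ a fifth line). The gap is in everything after that. The paper's proof does \emph{not} proceed by locally combining betweenness witnesses on $\{u,x,y\}$ and $\{u,x,w\}$; it first proves the strictly stronger fact that your point $u\in\overrightarrow{xy}\setminus\overrightarrow{yx}$ lies in \emph{both} $\overline{xw}$ \emph{and} $\overline{yw}$ (a full paragraph, argued globally from the explicit four-line list: if $u\notin\overline{xw}$ then $u$ and $x$ are symmetric with $\overline{xu}=\overrightarrow{xy}$, one deduces $\overline{uy}=\overrightarrow{xy}$ as well, and then $\overline{wu}$ is a fifth line). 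From membership in both lines it pins down $\{\overrightarrow{xu},\overrightarrow{ux}\}=\{\overline{xw},\overrightarrow{xy}\}$ and $\{\overrightarrow{yu},\overrightarrow{uy}\}=\{\overline{yw},\overrightarrow{xy}\}$, and each of these forces $u$ and $w$ to be symmetric with $\overline{uw}=\overline{xw}$ and $\overline{uw}=\overline{yw}$ respectively, contradicting $\overline{xw}\neq\overline{yw}$. You instead obtain only $u\in\overline{xw}\cup\overline{yw}$ and reduce "WLOG" to $u\in\overline{xw}$; note the reduction is not symmetric, since swapping $x$ and $y$ also swaps $\overrightarrow{xy}$ with $\overrightarrow{yx}$ and destroys the property $u\in\overrightarrow{xy}$ that your final step relies on.

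More seriously, the final step as described cannot close. The constraints you allow yourself --- $\betq(\{x,y,w\})=\emptyset$, one witness from $\{uxy,xuy,xyu\}$, one from each of $\{uxw,xuw,xwu\}$ and $\{uwx,wux,wxu\}$ --- are jointly consistent, so no chain of applications of (\ref{prop:1}), (\ref{prop:2}) or Lemma \ref{l:linearsystem} can produce a forbidden $\{x,y,w\}$-triple from them. Concretely, take $\rho(x,u)=\rho(u,x)=\rho(u,y)=1$, $\rho(y,u)=\rho(x,y)=\rho(y,x)=\rho(u,w)=\rho(w,u)=2$, $\rho(w,y)=5/2$, $\rho(x,w)=\rho(w,x)=\rho(y,w)=3$: this is a quasimetric whose betweenness is exactly $\{xuy,xuw,wux\}$, so $u\in\overrightarrow{xy}\setminus\overrightarrow{yx}$, $u\in\overrightarrow{xw}\cap\overrightarrow{wx}$, and yet $x,y,w$ are not collinear. (In this model $\overrightarrow{xu}$ is universal, which is precisely the global escape route your local analysis cannot see.) So the contradiction genuinely has to come from the line structure --- the fact that there are only four lines, none universal, and that $u$ must lie on both $\overline{xw}$ and $\overline{yw}$ --- not from triple manipulation on $\{u,x,y,w\}$ alone. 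The missing intermediate claim $u\in\overline{xw}\cap\overline{yw}$, together with the ensuing identification of the lines through $u$, is the heart of the paper's proof and is absent from your plan.
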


\begin{proof}
    Suppose, for a contradiction, that $Q\neq \overrightarrow{xy}\cup \overrightarrow{yx}$ and $\lvert \linesq \rvert \leq 4$, with no universal line. Let $u \in Q \setminus ( \overrightarrow{xy}\cup \overrightarrow{yx}$). Then, the set $\{\overrightarrow{ux},\overrightarrow{xu}\}\cap \{\overrightarrow{uy},\overrightarrow{yu}\}$ is empty and none of the lines $\overrightarrow{ux},\overrightarrow{xu},\overrightarrow{uy}$ or $\overrightarrow{yu}$ is an element of the set $\{\overrightarrow{xy},\overrightarrow{yx}\}$.  If any of the sets $\{ u,x \}$ or $\{ u,y \}$ are not symmetric, then $\lvert \linesq \rvert \geq 5$, which is a contradiction. So, the sets $\{ u,x \}$ and $\{ u,y \}$ are symmetric. If there is a point $z \in Q \setminus (\overline{ux} \cup \overline{uy})$, then the line $\overrightarrow{zu}\notin \{\overrightarrow{xy},\overrightarrow{yx},\overline{ux},\overline{uy}\}$, a contradiction. So, we have that $\linesq=\{\overrightarrow{xy},\overrightarrow{yx},\overline{ux},\overline{uy}\}$ and $Q=\overline{ux}\cup \overline{uy}$.  Without loss of generality, let $v\in \overrightarrow{xy}\setminus \overrightarrow{yx}$. 

    First, we prove that $v\in \overline{ux}\cap \overline{uy}$. In fact, if $v\notin \overline{ux}$, then $u\notin \overrightarrow{vx}\cup \overrightarrow{xv}$. So, $x$ and $v$ are symmetric and $\overline{xv}=\overrightarrow{xy}$, since $v\notin \overrightarrow{yx}$. This latter fact implies that none of the triples 
    $vyx,yvx,yxv$ belongs to $\betq$  and, since $y\in \overrightarrow{vx}$, then $vxy \in \betq$. This, together with the fact that $y\in \overrightarrow{xv}$, implies that $xyv \in \betq$. So, $v$ and $y$ are symmetric and with $\overline{vy}=\overrightarrow{xy}$ and $u \notin \overline{vy}$. As a consequence, we get that $v \notin \overline{uy}$. Hence, 
    $\overline{uv} \neq \overline{{uy}}$ which implies that 
    $\overline{uv} \notin \{ \overrightarrow{xy},\overrightarrow{yx},\overline{ux},\overline{uy} \}$, a contradiction. An analogous reasoning shows that $v\notin \overline{uy}$ implies $\overline{uv}\notin \{\overrightarrow{xy},\overrightarrow{yx},\overline{ux},\overline{uy}\}$.

    Now, since $v \in \overline{ux}\cap \overline{uy}$, then $u \in (\overrightarrow{vx} \cup \overrightarrow{xv}) \cap  (\overrightarrow{vy} \cup \overrightarrow{yv} )$. From $v\in \overrightarrow{xy}$ we also get that $x\in \overrightarrow{yv}\cup \overrightarrow{vy}$ and $y\in \overrightarrow{xv}\cup \overrightarrow{vx}$. Thus,
    \begin{equation}
        \label{eq:contra}
        \{\overrightarrow{xv},\overrightarrow{vx}\}=\{\overline{ux},\overrightarrow{xy}\} \textrm{ and }
    \{\overrightarrow{yv},\overrightarrow{vy}\}=\{\overline{uy},\overrightarrow{xy}\}.
    \end{equation}
    It is left to prove that (\ref{eq:contra}) implies that $\overrightarrow{vu}=\overrightarrow{uv}=\overline{ux}=\overline{uy}$, a contradiction. If $(\overrightarrow{vx},\overrightarrow{xv})=(\overline{ux},\overrightarrow{xy})$, then $vxu, uvx \in \betq$. Hence, $x \in \overrightarrow{vu}\cap \overrightarrow{uv}$ which implies that $u$ and $v$ are symmetric and $\overline{vu}=\overline{ux}$. Similarly, if $(\overrightarrow{xv},\overrightarrow{vx})=(\overline{ux},\overrightarrow{xy})$ then $uxv, xvu \in \betq$. So, $x\in \overrightarrow{vu} \cap \overrightarrow{uv}$, which implies that $u$ and $v$ are symmetric and $\overline{uv} =\overline{ux}$.  The same analysis applied to the equality $\{\overrightarrow{yv},\overrightarrow{vy}\}=\{\overline{uy},\overrightarrow{xy}\}$ shows that $\overline{vu}=\overline{uy}$.
\end{proof}

\begin{lemma}
    \label{l:uniquetriples}
    Let $x,y,z \in Q$ such that $x \notin \overrightarrow{yz}$ and $y \notin \overrightarrow{zx}$. If $x \in \overrightarrow{zy}$, $y \in \overrightarrow{xz}$ or $z \in \overrightarrow{xy}$, then $\betq(\{x,y,z\}) = \{ xzy \}$. Otherwise, it follows that $\betq(\{x,y,z\}) = \emptyset$.    
\end{lemma}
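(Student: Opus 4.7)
The plan is to reduce everything to a case analysis on the six ordered triples of three pairwise distinct points and translate the hypotheses about line memberships into explicit exclusions from $\betq$. Since $\{x,y,z\}$ has exactly six orderings, ruling out five of them will force the conclusion.

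First, I would unpack the line definition: for any distinct $u,v,w$, the membership $w \in \overrightarrow{uv}$ is equivalent to $wuv \in \betq$, $uwv \in \betq$, or $uvw \in \betq$. Applying this to $x \notin \overrightarrow{yz}$ yields $xyz, yxz, yzx \notin \betq$; applying it to $y \notin \overrightarrow{zx}$ yields $yzx, zyx, zxy \notin \betq$. Together these eliminate five of the six orderings from $\betq$, leaving $xzy$ as the only triple whose membership in $\betq$ is still undetermined.

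Next, I would observe that each of the three alternative conditions in the first part of the statement collapses to the same thing under the hypotheses. Indeed, $x \in \overrightarrow{zy}$ is by definition the disjunction $xzy \in \betq$, $zxy \in \betq$, or $zyx \in \betq$; since the last two are ruled out, this reduces to $xzy \in \betq$. An identical argument shows $y \in \overrightarrow{xz}$ is equivalent to $xzy \in \betq$ (the other two alternatives $yxz, xyz$ are excluded), and likewise for $z \in \overrightarrow{xy}$ (the alternatives $zxy, xyz$ are excluded).

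Putting this together finishes the proof: if any of $x \in \overrightarrow{zy}$, $y \in \overrightarrow{xz}$, or $z \in \overrightarrow{xy}$ holds, then $xzy \in \betq$, and combined with the five exclusions above this gives $\betq(\{x,y,z\}) = \{xzy\}$; otherwise $xzy \notin \betq$ and therefore $\betq(\{x,y,z\}) = \emptyset$. No non-trivial step is involved — this lemma is purely a bookkeeping exercise, and there is no real obstacle beyond being careful about which ordering corresponds to which betweenness triple in the definition of $\overrightarrow{\cdot \cdot}$.
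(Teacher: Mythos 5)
Your proof is correct and follows essentially the same route as the paper's: unpack the two non-membership hypotheses to exclude the five orderings other than $xzy$, then observe that each of the three membership conditions reduces to $xzy \in \betq$ once those exclusions are in place. Your version just spells out the bookkeeping in slightly more detail.
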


\begin{proof}
    Since $x \notin \overrightarrow{yz}$ then $xyz,yxz,yzx \notin \betq$. Similarly, since $y\notin \overrightarrow{zx}$ then $yzx,zyx,zxy \notin \betq$. So, $x \in \overrightarrow{zy}$, $y \in \overrightarrow{xz}$ or $z \in \overrightarrow{xy}$ holds if and only if $xzy \in \betq$.
\end{proof}

\begin{lemma}
\label{l:nouniquetriples}
    Let $x,y,z \in Q$ such that $\betq(\{x,y,z\})=\{ xzy \}$. Then, $\qq$ has a universal line or $\lvert \linesq \rvert \geq 5$.
\end{lemma}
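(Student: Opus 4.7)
The plan is to assume for contradiction that $\qq$ has no universal line and $|\linesq|\le 4$. The hypothesis $\betq(\{x,y,z\}) = \{xzy\}$ immediately forces strong structural constraints on the lines through these three points: the triple $xzy$ places $z \in \overrightarrow{xy}$, $y \in \overrightarrow{xz}$ and $x \in \overrightarrow{zy}$, so each of $\overrightarrow{xy}, \overrightarrow{xz}, \overrightarrow{zy}$ contains $\{x,y,z\}$; while the absence of any other triple among $\{x,y,z\}$ yields $z \notin \overrightarrow{yx}$, $y \notin \overrightarrow{zx}$ and $x \notin \overrightarrow{yz}$, so $\overrightarrow{yx}, \overrightarrow{yz}, \overrightarrow{zx}$ are pairwise distinct and all distinct from any line containing the full set $\{x,y,z\}$. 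Under $|\linesq|\le 4$, the three lines $\overrightarrow{xy}, \overrightarrow{xz}, \overrightarrow{zy}$ must therefore collapse into a single line $\ell$, giving $\linesq = \{\ell, \overrightarrow{yx}, \overrightarrow{yz}, \overrightarrow{zx}\}$.

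Next, I would apply Lemma \ref{l:covering} to each of the asymmetric pairs $(x,y), (x,z), (z,y)$. Since $\qq$ has no universal line and $|\linesq|\le 4$, the lemma forces $Q = \ell \cup \overrightarrow{yx} = \ell \cup \overrightarrow{zx} = \ell \cup \overrightarrow{yz}$. Because $\ell$ itself is not universal, I can pick $w \in Q \setminus \ell$, and such a $w$ must lie in $\overrightarrow{yx} \cap \overrightarrow{zx} \cap \overrightarrow{yz}$.

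The final step is to contradict the existence of such a point $w$. Each of the six lines $\overrightarrow{xw}, \overrightarrow{wx}, \overrightarrow{yw}, \overrightarrow{wy}, \overrightarrow{zw}, \overrightarrow{wz}$ belongs to $\linesq$ and, since $w \notin \ell$, cannot equal $\ell$; membership constraints such as $x \notin \overrightarrow{yz}$, $y \notin \overrightarrow{zx}$, $z \notin \overrightarrow{yx}$ further restrict each to one of only two candidates in $\{\overrightarrow{yx}, \overrightarrow{yz}, \overrightarrow{zx}\}$. Any identification like $\overrightarrow{xw} = \overrightarrow{yx}$ then forces a specific triple in $\betq$ (in this example $yxw$), because the nine triples $wxy, xwy, xyw, wxz, xwz, xzw, wzy, zwy, zyw$ are all forbidden (each would place $w$ in $\ell$). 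I would then run a case analysis over the eight combinations for $\overrightarrow{xw}, \overrightarrow{yw}, \overrightarrow{zw}$ (and where necessary their reverses), using properties (\ref{prop:1}) and (\ref{prop:2}), the triangle inequality, and Lemma \ref{l:linearsystem} to combine the forced triples with $xzy \in \betq$.

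The main obstacle is precisely this case analysis: in every branch, one must arrive at either (i) a new betweenness triple on $\{x,y,z\}$, contradicting $\betq(\{x,y,z\}) = \{xzy\}$; (ii) a triple forcing $w \in \ell$; or (iii) a line through $w$ whose point set exceeds what is possible for any line in $\{\ell, \overrightarrow{yx}, \overrightarrow{yz}, \overrightarrow{zx}\}$ (for instance one containing all of $\{x,y,z,w\}$). I expect the four-point Lemma \ref{l:linearsystem} to be the decisive tool whenever the forced triples on $\{w,x,y,z\}$ arrange themselves into a cyclic pattern, as it then typically produces a triple such as $zyx \in \betq$ that directly collides with the hypothesis.
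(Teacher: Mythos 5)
Your setup coincides exactly with the paper's own proof: from $\betq(\{x,y,z\})=\{xzy\}$ you correctly deduce that $\overrightarrow{xy}=\overrightarrow{xz}=\overrightarrow{zy}=\ell$ while $\overrightarrow{yx},\overrightarrow{yz},\overrightarrow{zx}$ are three further pairwise distinct lines, forcing $\lvert\linesq\rvert=4$; you then pick $w\notin\ell$, use Lemma \ref{l:covering} to make every triple of $\{w,x,y,z\}$ collinear, note that the nine triples that would put $w$ in $\ell$ are excluded, so each identification of a line through $w$ with one of $\overrightarrow{yx},\overrightarrow{yz},\overrightarrow{zx}$ forces a unique betweenness triple, and you plan to finish with Lemma \ref{l:linearsystem}. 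All of these observations are correct and are precisely the steps the paper takes (the paper packages the "forced unique triple" step as Lemma \ref{l:uniquetriples}, which you re-derive by direct elimination).

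The gap is that the decisive step is announced rather than performed: you write that you \emph{would} run the case analysis and \emph{expect} Lemma \ref{l:linearsystem} to close every branch, but the entire substance of the paper's proof is that verification. It does go through, and more cheaply than your count of eight cases suggests: fixing $(\overrightarrow{xw},\overrightarrow{wx})$ forces a triple which in turn pins down $(\overrightarrow{zw},\overrightarrow{wz})$, leaving only three genuine branches, and in each the four forced triples together with $xzy$ realize one of the cyclic patterns (\ref{prop:4})--(\ref{prop:6}); the extra triples those implications produce either return $w$ to $\ell$ or create a second triple on $\{x,y,z\}$ — exactly the contradictions (i) and (ii) you anticipate. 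So the plan is sound and follows the paper's route, but as submitted it stops short of a proof: you must actually exhibit the forced triples in each branch and check that they match a hypothesis of Lemma \ref{l:linearsystem}, since nothing in your text rules out a branch in which the four triples fail to form such a cyclic pattern.
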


\begin{proof}
    Suppose, for a contradiction, that $\qq$ has at most four lines, none of which is universal. Since $\betq(\{x,y,z\})=\{ xzy \}$, then the lines $\overrightarrow{zx}, \overrightarrow{yz}, \overrightarrow{yx}, \overrightarrow{xz}$ are pairwise distinct so $\lvert \linesq \rvert = 4$. More explicitly, we have that $y \notin \overrightarrow{zx}$, $x \notin \overrightarrow{yz}$, $z \notin \overrightarrow{yx}$ and $\overrightarrow{xz} = \overrightarrow{zy} = \overrightarrow{xy}$. Now, since $\overrightarrow{xz}$ is not universal, then the set $Q \setminus \overrightarrow{xz}$ is not empty so let $w \in Q \setminus \overrightarrow{xz}$. It is clear that 
    
    $$\{\overrightarrow{xw},\overrightarrow{wx}\}=\{\overrightarrow{zx},\overrightarrow{yx}\}, \{\overrightarrow{yw},\overrightarrow{wy}\}=\{\overrightarrow{yx},\overrightarrow{yz}\} \textrm{ and } \{\overrightarrow{zw},\overrightarrow{wz}\}=\{\overrightarrow{zx},\overrightarrow{yz}\}.$$
    The equalities follow from the fact that for any pairwise distinct $a,b,c \in \{ w,x,y,z \}$ we have that $\betq(\{ a,b,c \}) \neq \emptyset$, by Lemma \ref{l:covering}.

    First, suppose that $( \overrightarrow{xw} , \overrightarrow{wx} ) = ( \overrightarrow{yx} , \overrightarrow{zx} )$. Since $w\notin \overrightarrow{xy}$, $y\notin \overrightarrow{wx}$ and $w\in \overrightarrow{yx}$, by Lemma \ref{l:uniquetriples} we get that $\betq ( \{w,x,y\} ) = \{ yxw \}$. This implies that $( \overrightarrow{yw} , \overrightarrow{wy} ) = ( \overrightarrow{yx} , \overrightarrow{yz} )$. Also, since $w \notin  \overrightarrow{zy}$, $z \notin \overrightarrow{yw}$ and $w \in \overrightarrow{yz}$, by Lemma \ref{l:uniquetriples} we get that $\betq( \{ w,y,z \} ) = \{ wyz \}$. So, $( \overrightarrow{zw} , \overrightarrow{wz} ) = ( \overrightarrow{zx} , \overrightarrow{yz} )$. Finally, since $z \notin \overrightarrow{xw}$, $x\notin \overrightarrow{wz}$ and $w\in \overrightarrow{zx}$, applying Lemma \ref{l:uniquetriples} gives us that $\betq( \{ w,x,z \} ) = \{ zwx \}$. Therefore, we get that $\betq( \{ w,x,y,z \} )=\{ xzy,yxw,wyz,zwx \}$ and, by Lemma \ref{l:linearsystem}, this is a contradiction.

    Now, suppose that $(\overrightarrow{xw},\overrightarrow{wx})=(\overrightarrow{zx},\overrightarrow{yx})$. By Lemma \ref{l:uniquetriples}, since $z\notin \overrightarrow{wx}$, $w\notin \overrightarrow{xz}$ and $w\in \overrightarrow{zx}$, then $\betq(\{w,x,z\})=\{zxw\}$. This implies that $(\overrightarrow{zw},\overrightarrow{wz})=(\overrightarrow{zx},\overrightarrow{yz})$. We first consider the case that $(\overrightarrow{yw},\overrightarrow{wy})=(\overrightarrow{yx},\overrightarrow{yz})$. By Lemma \ref{l:uniquetriples}, since $y\notin \overrightarrow{xw}$, $x\notin \overrightarrow{wy}$ and $w\in \overrightarrow{yx}$, then $\betq(\{w,x,y\})=\{ywx\}$. Also, by Lemma \ref{l:uniquetriples}, since $w\notin \overrightarrow{zy}$, $z\notin \overrightarrow{yw}$ and $w\in \overrightarrow{yz}$ then $\betq(\{w,y,z\})=\{wyz\}$. Thus, we get that $\betq(\{w,x,y,z\})=\{xzy,ywx,wyz,zxw\}$, a contradiction by Lemma \ref{l:linearsystem}. The remaining case is $(\overrightarrow{yw},\overrightarrow{wy})=(\overrightarrow{yz},\overrightarrow{yx})$. Again, by Lemma \ref{l:uniquetriples}, since $y\notin \overrightarrow{zw}$, $z\notin \overrightarrow{wy}$ and $w\in \overrightarrow{yz}$ then $\betq(\{w,y,z\})=\{ywz\}$. Also, by Lemma \ref{l:uniquetriples}, since $w\notin \overrightarrow{xy}$, $x\notin \overrightarrow{yw}$ and $w\in \overrightarrow{yx}$, then $\betq(\{w,x,y\})=\{wyx\}$. Therefore, we conclude that $\betq(\{w,x,y,z\})=\{xzy,ywz,wyx,zxw\}$, which is a contradiction by Lemma \ref{l:linearsystem}.
\end{proof}

More often, we will use Lemma \ref{l:nouniquetriples} as follows. If $\qq$ is a quasimetric space with less that five lines none of them being universal, then the betweenness induced by any three points of $\qq$ must contain at least two triples. In particular, if there are distinct points $x,y$ and $z$ such that $z\in \overrightarrow{xy}\setminus \overrightarrow{yx}$, then we have that 
$zyx,yzx$ and $yxz$ do not belong to $\betq$ and that at least two of the triples $zxy,xzy$ or $xyz$ must belong to $\betq$. But $xzy\in \betq$ implies that $zxy$ and $xyz$ do not belong to $\betq$. Hence, the only possibility is that $zxy$ and $xyz$ belong to $\betq$.

\begin{lemma}
    \label{l:nonsymmetric}
    Let $\qq$ be a quasimetric space such that no two points are symmetric. Then, $\qq$ has a universal line or $\lvert \linesq \rvert \geq 5$.
\end{lemma}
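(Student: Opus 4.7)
Assume for contradiction that $\qq$ has no universal line, $|\linesq|\leq 4$, and no two points are symmetric. The cases $|Q|\leq 2$ are trivial or exclude the hypotheses (for $|Q|=2$, the two points are automatically symmetric). For $|Q|=3$, Lemma~\ref{l:covering} applied to any pair $x,y$ gives $Q=\overrightarrow{xy}\cup\overrightarrow{yx}$, and the third point must lie in one of these lines, forcing that line to equal $Q$ and be universal---a contradiction. Hence assume $|Q|\geq 4$ and fix a pair $x,y$. Since $\overrightarrow{xy}\ne\overrightarrow{yx}$ and $Q=\overrightarrow{xy}\cup\overrightarrow{yx}$ by Lemma~\ref{l:covering}, but neither line is universal, both $A:=\overrightarrow{xy}\setminus\overrightarrow{yx}$ and $B:=\overrightarrow{yx}\setminus\overrightarrow{xy}$ are nonempty; pick $z\in A$ and $w\in B$.

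Applying the discussion after Lemma~\ref{l:nouniquetriples} to $z$ and to $w$ gives $\betq(\{x,y,z\})=\{zxy,xyz\}$ and $\betq(\{x,y,w\})=\{wyx,yxw\}$; every other triple on $\{x,y,z\}$ or $\{x,y,w\}$ is excluded from $\betq$. Using these exclusions together with repeated applications of property~\eqref{prop:2}, the triples $xzw$, $wxz$, $zwx$ are ruled out of $\betq(\{x,z,w\})$ and $ywz$, $zyw$, $wzy$ are ruled out of $\betq(\{y,z,w\})$; hence $\betq(\{x,z,w\})\subseteq\{xwz,zxw,wzx\}$ and $\betq(\{y,z,w\})\subseteq\{yzw,zwy,wyz\}$. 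By Lemma~\ref{l:nouniquetriples} each of these subsets has at least two elements.

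I then split into four subcases on whether $xwz\in\betq$ and whether $yzw\in\betq$. If both are in, \eqref{prop:4} applied with $(w,x,y,z)\mapsto(z,x,y,w)$ produces $zwy,wzx\in\betq$, and then \eqref{prop:5} with $(w,x,y,z)\mapsto(x,y,z,w)$ forces $zyx\in\betq$, contradicting $z\notin\overrightarrow{yx}$. If neither is in, the two sets must equal $\{zxw,wzx\}$ and $\{zwy,wyz\}$, and the same instance of~\eqref{prop:5} again produces $zyx\in\betq$. In the mixed case $xwz\in\betq$, $yzw\notin\betq$, that same instance of~\eqref{prop:5} shows $wzx\notin\betq$ (else $zyx\in\betq$), so $zxw\in\betq$; the distance identities from $zxw$, $zwy$, and $zxy$ then collapse to $\rho(x,y)=\rho(x,w)+\rho(w,y)$, i.e., $xwy\in\betq$, contradicting $w\notin\overrightarrow{xy}$. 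The symmetric mixed case yields $yzx\in\betq$ in exactly the same way.

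The main obstacle is the bookkeeping of the second paragraph---selecting, for each of the six triples to be excluded, the right instance of~\eqref{prop:2} to combine with the triples already known to lie in $\betq$---and, in the case analysis, matching each subcase with the correct substitution into Lemma~\ref{l:linearsystem} or the right sequence of distance identities. No single uniform argument seems to dispatch all four subcases at once.
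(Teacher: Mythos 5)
Your overall strategy is genuinely different from the paper's: the paper identifies the four lines as $\{\overrightarrow{xy},\overrightarrow{yx},\overrightarrow{wz},\overrightarrow{zw}\}$ and introduces two further points $u\in\overrightarrow{wz}\setminus\overrightarrow{zw}$ and $v\in\overrightarrow{zw}\setminus\overrightarrow{wz}$ to force $w$ and $x$ to be symmetric, whereas you try to close the argument entirely inside the four points $x,y,z,w$ by betweenness identities. Your preparatory work is correct: the six exclusions in your second paragraph do follow from \eqref{prop:2}, and Cases 2--4 do terminate in contradictions --- although not the ones you name. The instance of \eqref{prop:5} with $(w,x,y,z)\mapsto(x,y,z,w)$ has hypothesis $xyz,zwy,yxw,wzx$ and conclusion $xwz,zxy,yzw,wyx$; in Case 4 and in the mixed cases the contradiction is that $xwz$ or $yzw$ is forced back into $\betq$, not that $zyx\in\betq$.

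The genuine gap is Case 1. There the conclusion $xwz,zxy,yzw,wyx$ of that same instance consists entirely of triples already known to lie in $\betq$, so nothing new is forced, and in particular $zyx$ is not. This cannot be repaired by a cleverer choice of identity, because the Case 1 configuration is realizable: take $\rho(x,y)=\rho(y,x)=\rho(y,z)=\rho(z,x)=\rho(x,w)=\rho(w,y)=\rho(z,w)=\rho(w,z)=1$ and $\rho(z,y)=\rho(x,z)=\rho(w,x)=\rho(y,w)=2$. This is a quasimetric on four points whose betweenness is exactly $\{zxy,xyz,wyx,yxw,xwz,yzw,zwy,wzx\}$, and $zyx\notin\betq$. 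Hence no consequence of the triangle inequality alone rules out Case 1; you must invoke the hypotheses you have not yet used. In the example above $\overrightarrow{xz}$ is universal (indeed $xyz,xwz\in\betq$ already put $y,w\in\overrightarrow{xz}$), so for $n=4$ Case 1 contradicts non-universality directly, but for $n\geq 5$ there may be a point outside $\overrightarrow{xz}$ and the argument must continue with additional points --- which is precisely what the paper's proof does with its auxiliary points $u$ and $v$.
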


\begin{proof}
    Suppose, for a contradiction, that $\qq$ has at most four lines, none of which is universal. Let $x,y\in Q$, so $\overrightarrow{xy} \neq \overrightarrow{yx}$. It follows that $Q=\overrightarrow{xy}\cup \overrightarrow{yx}$, by Lemma \ref{l:covering}. Since $\qq$ has no universal line, then there are points $z\in \overrightarrow{xy} \setminus \overrightarrow{yx}$ and $w \in \overrightarrow{yx} \setminus \overrightarrow{xy}$. By Lemma \ref{l:nouniquetriples}, the latter implies that $wyx,yxw\in \betq$ and $y\in \overrightarrow{wx}\cap \overrightarrow{xw}$. So, $\qq$ has four lines and $\linesq=\{\overrightarrow{xy},\overrightarrow{yx},\overrightarrow{wz},\overrightarrow{zw}\}$.

    Moreover, there are $u\in \overrightarrow{wz}\setminus \overrightarrow{zw}$ and $v\in \overrightarrow{zw}\setminus\overrightarrow{wz}$, which implies that $\{\overrightarrow{xy},\overrightarrow{yx}\}=\{\overrightarrow{uv},\overrightarrow{vu}\}.$ Hence, we may assume that $x\notin \overrightarrow{wz}$ and $y\notin\overrightarrow{zw}$. Now, since $y\in \overrightarrow{wx}\cap \overrightarrow{xw}$, $x\notin \overrightarrow{wz}$, $w\notin \overrightarrow{xy}$ and $y\notin \overrightarrow{zw}$ we get that $\overrightarrow{wx},\overrightarrow{xw}\notin\{\overrightarrow{xy},\overrightarrow{zw},\overrightarrow{wz}\},$ which implies that $w$ and $x$ are symmetric, a contradiction.
\end{proof}

\begin{lemma}
    \label{l:threeinternonempty}
    Let $x,y,z \in Q$ such that $z\notin \overrightarrow{xy}\cup \overrightarrow{yx}$ and $(\overrightarrow{xy}\cup \overrightarrow{yx})\cap (\overrightarrow{xz}\cup \overrightarrow{zx})\cap(\overrightarrow{yz}\cup \overrightarrow{zy})$ is non-empty, then $\qq$ has a universal line or $\lvert \linesq \rvert \geq 5$.
\end{lemma}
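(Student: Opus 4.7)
My plan is to argue by contradiction, assuming $\qq$ has at most four lines with none universal. First I would apply Lemma~\ref{l:covering} to the pair $(x,y)$: since $z\notin\overrightarrow{xy}\cup\overrightarrow{yx}$ witnesses that this union is not all of $Q$, the lemma forces $\overrightarrow{xy}=\overrightarrow{yx}$, i.e., $x,y$ are symmetric. Because the condition $z \notin \overrightarrow{xy}\cup\overrightarrow{yx}$ is equivalent to $\{x,y,z\}$ being non-collinear, we also get $y \notin \overrightarrow{xz}\cup\overrightarrow{zx}$ and $x \notin \overrightarrow{yz}\cup\overrightarrow{zy}$; applying Lemma~\ref{l:covering} to each of those pairs yields that $\{x,z\}$ and $\{y,z\}$ are also symmetric. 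I would then have three pairwise distinct lines $\overline{xy},\overline{xz},\overline{yz}$, distinctness following since an equality such as $\overline{xy}=\overline{xz}$ would force $\{x,y,z\}$ to be collinear.

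Next I would fix a point $w \in \overline{xy}\cap\overline{xz}\cap\overline{yz}$ from the nonempty intersection. Note that $w\notin\{x,y,z\}$: for instance $w=x$ would give $x\in\overline{yz}$ and render $\{x,y,z\}$ collinear, a contradiction. The task then reduces to deriving a contradiction from the betweenness relations on the four points $\{w,x,y,z\}$, where $\betq(\{x,y,z\})=\emptyset$ while each of $\betq(\{w,x,y\})$, $\betq(\{w,x,z\})$, $\betq(\{w,y,z\})$ has at least two triples by Lemma~\ref{l:nouniquetriples}. Moreover, since each of $\{x,y\},\{x,z\},\{y,z\}$ is symmetric and $w$ lies in the corresponding line, at least one \emph{forward} triple and one \emph{backward} triple must appear in each of the three sets above.

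The core of the argument will be a case analysis of these triples, driven by property~(\ref{prop:2}): any two triples of the form $wab$ and $wbc$ in $\betq$ with $\{a,b,c\}=\{x,y,z\}$ would force $abc\in\betq$, contradicting non-collinearity of $\{x,y,z\}$. This constraint sharply limits how $w$ can relate to $\{x,y,z\}$. For each surviving configuration I would either produce a new line, for instance $\overrightarrow{wx}$, $\overrightarrow{xw}$ or an analogous line through $w$, distinct from $\overline{xy},\overline{xz},\overline{yz}$ to yield a fifth line, or show that one of these lines covers every point of $Q$ and is therefore universal. The main obstacle will be the bookkeeping of sub-cases together with the careful distance and line-inclusion arguments needed when $w$ sits off-center relative to one or more of the pairs.
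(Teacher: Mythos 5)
Your setup is correct and matches the paper: Lemma~\ref{l:covering} makes the three pairs $\{x,y\},\{x,z\},\{y,z\}$ symmetric, the three lines $\overline{xy},\overline{xz},\overline{yz}$ are pairwise distinct, a point $w$ of the triple intersection lies outside $\{x,y,z\}$, and property~(\ref{prop:2}) together with Lemma~\ref{l:nouniquetriples} drives the case analysis on $\betq(\{w,x,y,z\})$. The gap is in your endgame. You assert that ``the task reduces to deriving a contradiction from the betweenness relations on the four points $\{w,x,y,z\}$,'' with the fallback dichotomy ``produce a fifth line or show one of these lines is universal.'' Neither horn is reachable from the four points alone. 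The case analysis does \emph{not} eliminate every configuration: there is a surviving one in which $\overrightarrow{wu}\neq\overrightarrow{uw}$ for each $u\in\{x,y,z\}$ and the six lines $\overrightarrow{wu},\overrightarrow{uw}$ collapse onto $\{\overline{xy},\overline{xz},\overline{yz},\ell_4\}$ for a single fourth line $\ell_4$ containing all of $\{w,x,y,z\}$ --- exactly four lines, no fifth. And you cannot show $\ell_4$ is universal by looking only at $w,x,y,z$: when $n\geq 5$ there may be further points about which the four-point betweenness says nothing (for $n=4$ the star metric $K_{1,3}$ realizes precisely this picture with $\ell_4$ universal, which shows the configuration itself is consistent).

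The missing idea, which is the heart of the paper's proof, is to first prove the intermediate claim that for \emph{every} point $t$ of $\overline{xy}\cap\overline{xz}\cap\overline{yz}$ some line $\ell\in\{\overrightarrow{tu},\overrightarrow{ut}\}$ with $u\in\{x,y,z\}$ contains $\{x,y,z\}$, so that $\linesq=\{\overline{xy},\overline{xz},\overline{yz},\ell_4\}$; then to use non-universality of $\ell_4$ to pick $s\notin\ell_4$, show that $s$ must again lie in $\overline{xy}\cap\overline{xz}\cap\overline{yz}$ (since the other three lines are the only ones left to cover $s$ with each of $x,y,z$), and apply the same claim to $s$: the resulting line through $s$ containing $\{x,y,z\}$ would have to be $\ell_4$, contradicting $s\notin\ell_4$. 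Without this second round involving a fifth point, your case analysis cannot close.
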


\begin{proof}
    Suppose, for a contradiction, that $\qq$ has at most four lines, none of which is universal. Since $z\notin \overrightarrow{xy}\cup \overrightarrow{yx}$, then $x\notin \overrightarrow{zy}\cup \overrightarrow{yz}$ and $y\notin \overrightarrow{xz}\cup \overrightarrow{zx}$. By Lemma \ref{l:covering}, the pair of points $x$ and $y$ is symmetric as well as the pairs $x,z$ and $y,z$. Moreover, for all $\ell \in \{ \overline{xy}, \overline{xz}, \overline{yz} \}$ it holds that $ \{ x,y,z \} \not\subseteq \ell$.

    Let $t \in \overline{xy}\cap \overline{xz}\cap\overline{yz}$ and $u \in \{x,y,z\}$. If $\overrightarrow{tu}=\overrightarrow{ut}$ and $\overline{tu}\notin\{\overline{xz},\overline{xy},\overline{yz}\}$, then let $v \in Q \setminus \overline{tu}$. So, $t \notin \overline{uv}$ and the lines $\overline{xz},\overline{xy},\overline{yz},\overline{tu},\overline{uv}$ are pairwise distinct, a contradiction. Thus, for all $u\in \{x,y,z\}$ it holds that $\overrightarrow{tu} \neq \overrightarrow{ut}$.

    \begin{claim}
        For every $t \in \overline{xy} \cap \overline{xz} \cap \overline{yz}$ there is $u \in \{ x,y,z \}$ and $\ell \in \{ \overrightarrow{tu}, \overrightarrow{ut} \}$ such that $\{x,y,z\}\subset \ell$.    
    \end{claim}

    \begin{proof}
        Let $t \in \overline{xy} \cap \overline{xz} \cap \overline{yz}$ and $\{ u,v,w \} = \{ x,y,z \}$.
    
        First, consider the case $\{ \overrightarrow{tu}, \overrightarrow{ut} \} \subseteq \{ \overline{uv}, \overline{uw}\}$. Since $t \in \overline{uv} \cap \overline{uw}$, then $\{ \overrightarrow{tu}, \overrightarrow{ut} \} = \{\overline{uv},\overline{uw}\}$, since $v,w\in \overrightarrow{ut}\cup\overrightarrow{tu}$. If $\overrightarrow{tu} = \overline{uv}$ and $\overrightarrow{ut} = \overline{uw}$, then $v \notin \overrightarrow{ut}$. However, since $t \in \overline{uv}$, by Lemma \ref{l:nouniquetriples} we get that $vtu, tuv \in \betq$, so $u \in \overrightarrow{tv} \cap \overrightarrow{vt}$. Finally, since $t \in \overline{vw}$ then $w \in \overrightarrow{vt} \cup \overrightarrow{tv}$. So, either $\{ u,v,w \} \subset \overrightarrow{vt}$ or $\{ u,v,w \} \subset \overrightarrow{tv}$.  The case is analogous if $\overrightarrow{tu} = \overline{wu}$ and $\overrightarrow{ut} = \overline{uw}$.  

        Now, consider the case for which there is a line $\ell\in \{ \overrightarrow{tu}, \overrightarrow{ut} \} \setminus \{ \overline{xy}, \overline{xz}, \overline{yz} \}$. Without loss of generality, we may assume that $u=x$, otherwise rename $x,y$ and $z$. If $t$ and $x$ are symmetric, then $y,z \in \overline{tx}$ since $t \in \overline{xy} \cap \overline{xz}$ and we are done. So, assume that $\overrightarrow{xt} \neq \overrightarrow{tx}$.

        Suppose, for a contradiction, that $\lvert \overrightarrow{xt} \cap \{ x,y,z \} \rvert \leq 2$ and $\lvert \overrightarrow{tx} \cap \{ x,y,z \} \rvert \leq 2$. If $\overrightarrow{xt} \notin \{ \overline{xy}, \overline{xz}, \overline{yz} \}$, then $\linesq = \{ \overline{xy}, \overline{xz}, \overline{yz}, \overrightarrow{xt} \}$. Without loss of generality, we may assume that $y \notin \overrightarrow{xt}$. Otherwise, relabel $y$ and $z$. By Lemma \ref{l:covering}, we get $y \in \overrightarrow{tx}$ and Lemma \ref{l:nouniquetriples} implies that $ytx, txy \in \betq$. So, $x \in \overrightarrow{yt} \cap \overrightarrow{ty}$ which implies that $y$ and $t$ are symmetric and $\overline{yt} = \overline{xy}$. Since $z \notin \overline{xy} = \overline{yt}$, then $t \notin \overline{yz}$, a contradiction. Now, if $\overrightarrow{tx} \notin \{ \overline{xy}, \overline{xz}, \overline{yz} \}$ then $\linesq = \{ \overline{xy}, \overline{xz}, \overline{yz}, \overrightarrow{tx} \}$. Without loss of generality, we may assume that $y \notin \overrightarrow{tx}$. Otherwise, relabel $y$ and $z$. By Lemma \ref{l:covering}, we get $y \in \overrightarrow{xt}$ and Lemma \ref{l:nouniquetriples} implies that $yxt, xty \in \betq$. So, $x \in \overrightarrow{yt} \cap \overrightarrow{ty}$ which implies that $y$ and $t$ are symmetric and $\overline{yt} = \overline{xy}$. Since $z \notin \overline{xy} = \overline{yt}$, then $t \notin \overline{yz}$, a contradiction.
    \end{proof}
    
    By the previous claim, we have that $\linesq=\{\overline{xy}, \overline{xz}, \overline{yz}, \overrightarrow{tu} \}$. Let $s \notin \overrightarrow{tu}$. From Lemma \ref{l:covering} and Lemma \ref{l:uniquetriples} we have that $sut,uts \in \betq$. It is clear that for every $v \in \{ x,y,z \} \setminus \{ u \}$ it holds that $\overrightarrow{uv} \neq \overrightarrow{vu}$, otherwise we use the previous argument to arrive to a contradiction. Therefore, $\{\overrightarrow{tx}, \overrightarrow{xt}, \overrightarrow{ty}, \overrightarrow{yt}, \overrightarrow{tz}, \overrightarrow{zt} \} = \{ \overline{xy}, \overline{xz}, \overline{yz}, \overrightarrow{tu} \}$ and $\overrightarrow{tu} \in \{ \overrightarrow{tx}, \overrightarrow{xt} \} \cap \{ \overrightarrow{ty}, \overrightarrow{yt} \} \cap \{ \overrightarrow{tz}, \overrightarrow{zt} \}$. Since $s \notin \overrightarrow{tu}$, then $s \in \overline{xy} \cap \overline{xz} \cap \overline{yz}$. So, there is a $w \in \{ x,y,z \}$ such that the set $\{ \overrightarrow{sw}, \overrightarrow{ws} \} \setminus \{ \overline{xy}, \overline{xz}, \overline{yz} \}$ is non-empty, by our previous claim. This is a contradiction since $s \notin \overrightarrow{tu}$. 
\end{proof}

\begin{proposition}\label{p:nonthreesymmetric}
    Let $\qq$ be a quasimetric space of size $n \geq 5$ with no symmetric triples. Then, $\qq$ has a universal line or $\lvert \linesq \rvert \neq 4$.
\end{proposition}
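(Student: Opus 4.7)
I argue by contradiction: suppose $\qq$ has no universal line and $\lvert\linesq\rvert = 4$. First I build a pairwise-symmetric triple whose three pairwise lines are distinct. By Lemma \ref{l:nonsymmetric}, some pair is symmetric; pick $x,y$ and set $L_1 := \overline{xy}$. Non-universality of $L_1$ gives $z\notin L_1$, so $\{x,y,z\}$ is not collinear. Lemma \ref{l:covering} applied to $\{x,z\}$ (resp.\ $\{y,z\}$): if the pair were not symmetric, $Q = \overrightarrow{xz}\cup\overrightarrow{zx}$, placing $y$ in that union and forcing $\{x,y,z\}$ collinear—contradicting $z\notin L_1$. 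So both pairs are symmetric. Lemma \ref{l:threeinternonempty} now yields $L_1\cap L_2\cap L_3 = \emptyset$ with $L_2 := \overline{xz}$, $L_3 := \overline{yz}$, and combined with $z\notin L_1$, $y\notin L_2$, $x\notin L_3$ this forces these three lines to be pairwise distinct. Let $L_4$ be the fourth line.

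Next I show that every point of $Q\setminus\{x,y,z\}$ lies in one of the pairwise intersections $L_i\cap L_j$ with $\{i,j\}\subset\{1,2,3\}$. For any such $w$, the prior argument applied to $\{w,p\}$ for $p\in\{x,y,z\}$ rules out non-symmetry in all but tolerable cases: the line $\overline{wp}$, being in $\linesq$ and containing $p$, must lie in $\{L_1,L_2,L_4\}$ when $p=x$, and analogously for $y,z$. Intersecting the three resulting inclusions gives $w\in L_4\cup(L_1\cap L_2)\cup(L_1\cap L_3)\cup(L_2\cap L_3)$. The case $w\in L_4\setminus(L_1\cup L_2\cup L_3)$ is impossible: using symmetry of $\{w,x\}$ and $\{w,y\}$, the only admissible values are $\overline{wx}=\overline{wy}=L_4$, whence $y\in\overline{wx}$ makes $\{w,x,y\}$ collinear and therefore $w\in L_1$, a contradiction. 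Since $L_1\cap L_2\cap L_3 = \emptyset$, writing $A := L_1\cap L_2$, $B := L_1\cap L_3$, $C := L_2\cap L_3$ gives a partition $Q = A\sqcup B\sqcup C$ with $x\in A$, $y\in B$, $z\in C$, and $L_1 = A\cup B$, $L_2 = A\cup C$, $L_3 = B\cup C$.

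I then use the no-symmetric-triple hypothesis to force every extra point to lie in $L_4$. Take any $a\in A\setminus\{x\}$. Applying the same non-symmetry argument to $\{a,y\}$ and $\{a,z\}$ (each would force $a$ into $L_3$, contradicting $a\in A$), both pairs are symmetric; a similar analysis handles $\{a,x\}$. The pairwise-symmetric triples $\{x,y,a\}$ and $\{x,z,a\}$ are not symmetric triples, so the three lines of each cannot all coincide. Combining these restrictions with $\overline{ya}\in\{L_1,L_4\}$, $\overline{xa}\in\{L_1,L_2,L_4\}$, $\overline{za}\in\{L_2,L_4\}$ (from line-containment requirements through $a$), every sub-case forces one of these lines to equal $L_4$, whence $a\in L_4$. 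Symmetric reasoning handles extras in $B$ and $C$, so every point of $Q\setminus\{x,y,z\}$ lies in $L_4$; moreover, each such point forces a specific element of $\{x,y,z\}$ into $L_4$ (determined by its sub-case).

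The final step invokes $n\geq 5$: there are at least two extras $a,b\in L_4$, and case analysis on their positions in $A,B,C$ and on which of $x,y,z$ are forced into $L_4$ produces either a fifth line in $\linesq$ or a line containing all of $Q$, contradicting our assumptions. The main obstacle is this final case analysis: with two or more extras, the sub-cases interact subtly, and one must carefully exploit the no-symmetric-triple condition together with Lemmas \ref{l:covering}, \ref{l:uniquetriples}, \ref{l:nouniquetriples}, and \ref{l:threeinternonempty} to close every configuration.
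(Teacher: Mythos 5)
Your opening matches the paper's: a symmetric pair $x,y$ from Lemma \ref{l:nonsymmetric}, a point $z\notin\overline{xy}$, symmetry of $\{x,z\}$ and $\{y,z\}$ via Lemma \ref{l:covering}, emptiness of $\overline{xy}\cap\overline{xz}\cap\overline{yz}$ via Lemma \ref{l:threeinternonempty}, and the partition $Q=A\sqcup B\sqcup C$ with $L_1=A\cup B$, $L_2=A\cup C$, $L_3=B\cup C$; all of that is sound (the paper records it as its equation (\ref{eq:threesymlines})). The argument breaks after that point. For $a\in A\setminus\{x\}$ you assert that ``a similar analysis handles $\{a,x\}$,'' but the analysis you used for $\{a,y\}$ and $\{a,z\}$ needs a third point non-collinear with the pair so that Lemma \ref{l:covering} produces a contradiction; for $\{a,x\}$ no such witness is available, since $a\in\overline{xy}\cap\overline{xz}$ makes both $y$ and $z$ collinear with $a$ and $x$. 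Thus the sub-case where $\{a,x\}$ is non-symmetric with $\{\overrightarrow{ax},\overrightarrow{xa}\}=\{L_1,L_2\}$ (whose union is all of $Q$, so Lemma \ref{l:covering} is silent) is not excluded, and in that sub-case your forcing argument---which rests on $\{x,y,a\}$ and $\{x,z,a\}$ being pairwise-symmetric, non-symmetric triples---yields nothing; the conclusion $a\in L_4$ does not follow. This is not cosmetic: in the actual four-line spaces $\mathcal{D}_i^{p,q,r}$ two points of the same cell are never symmetric, so no ``similar analysis'' can establish symmetry of $\{a,x\}$.

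The second and decisive gap is that the contradiction itself is never produced: your last paragraph only describes a case analysis on the extra points and concedes that one must still ``close every configuration.'' That closing step is exactly where the paper's proof does its real work. After reaching the same partition, the paper takes a different route: it picks $w\notin\ell$ (the fourth line), shows $\ell$ cannot be a symmetric line (a symmetric $\ell=\overline{pq}$ would force $Q\setminus\overline{pq}=Q\setminus\overline{xy}$ via (\ref{eq:threesymlines}), hence $\ell=\overline{xy}$), writes $\ell=\overrightarrow{pq}$ with $\overrightarrow{qp}\in\{\overline{xz},\overline{yz}\}$, and then in two cases ($p\notin\overline{yz}$ and $p\in\overline{yz}$) runs explicit betweenness computations with Lemma \ref{l:nouniquetriples} and property (\ref{prop:2}) to reach a contradiction. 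None of that content is present in your plan, so as written the proposal is an incomplete outline with one intermediate step that appears unfixable in its current form.
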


\begin{proof}
    Suppose, for a contradiction, that $\qq$ has no symmetric triples and $\lvert \linesq \rvert = 4$, with no universal line. From Lemma \ref{l:nonsymmetric}, there is a symmetric pair of points $x$ and $y$. Since $\overline{xy}$ is not universal, let $z \in Q \setminus \overline{xy}$. By Lemma \ref{l:covering}, we also have that the sets $\{ x,z \}$ and $\{ y,z \}$ are symmetric such that $y \notin \overline{xz}$ and $x \notin \overline{yz}$. We conclude that $\linesq=\{\overline{xy},\overline{yz},\overline{xz}, \ell\}$ for some line $\ell \notin \{\overline{xy},\overline{yz},\overline{xz}\}$.

    If there is a point $w \in Q \setminus (\overline{xy} \cup \overline{xz})$, then $\overline{wx} = \ell$ and $ z, y \notin \ell$. This implies that   $\overline{wz}=\overline{wy}=\overline{yz}$, a contradiction. So, the set $Q \setminus (\overline{xy} \cup \overline{xz})$ is empty. The same argument shows that $Q \setminus (\overline{xy} \cup \overline{yz}) = Q \setminus (\overline{xz} \cup \overline{yz}) = \emptyset$. Therefore, we have that $Q = \overline{xy} \cup \overline{xz} = \overline{xy} \cup \overline{yz} = \overline{yz} \cup \overline{xz}$. From Lemma \ref{l:threeinternonempty}, it also follows that $\overline{xy} \cap \overline{yz} \cap \overline{xz} = \emptyset$, so
    \begin{equation}
        \label{eq:threesymlines}
        Q \setminus  \overline{xy} = \overline{xz} \cap \overline{yz}, \  Q\setminus \overline{xz} = \overline{xy} \cap \overline{yz} \textrm{ and } Q\setminus \overline{yz}= \overline{xz}\cap \overline{yx}. 
    \end{equation}
    Note that this holds for any symmetric pair of points $x$ and $y$ and a point $z \notin \overline{xy}$.

    Since $\qq$ has no universal lines, let $w \in Q \setminus \ell$. Without loss of generality, we may assume that $w \notin \overline{xy}$. So, $\overline{wx} = \overline{xz}$ and $\overline{wy} = \overline{yz}$. Since $\qq$ has no symmetric triples, then $\overrightarrow{wz} \neq \overrightarrow{zw}$ and $\{\overrightarrow{wz},\overrightarrow{zw}\}=\{\overline{wx},\overline{wy}\}$. Let $p,q \in Q$ such that $\ell = \overrightarrow{pq}$. If $\ell=\overline{pq}$, then $\{\overline{wp},\overline{wq}\}=\{\overline{xz},\overline{yz}\}$. So, by (\ref{eq:threesymlines}) applied to both $\{x,y,z\}$ and $\{w,p,q\}$, we get that $$Q\setminus \overline{xy}= \overline{xz}\cap \overline{yz}=\overline{wp}\cap \overline{wq}=Q\setminus \overline{pq},$$ which implies that $\ell=\overline{xy}$, a contradiction. Hence, the line $\ell$ is not symmetric so $\overrightarrow{pq} \neq \overrightarrow{qp}$. As $w \notin \ell=\overrightarrow{pq}$, by Lemma \ref{l:covering} we have that $w \in \overrightarrow{qp}$ and then $$ \overrightarrow{qp} \in \{ \overline{xz}, \overline{yz} \} = \{ \overline{xw}, \overline{yw} \}.$$ Without loss of generality, we may assume that $\overrightarrow{qp}=\overline{xz}$. Since $\overline{xy}\cap \overline{yz}\cap \overline{xz}$ is empty and $Q = \overline{xy} \cup \overline{yz}$ then $p \in \overline{xy}$ if and only if $p \notin \overline{yz}$ and $q \in \overline{xy}$ if and only if $q \notin \overline{yz}$.

    First, we consider the case $p \notin \overline{yz}$. Then, the sets $ \{ p, y \}$ and $ \{p, z \}$ are symmetric such that $\overline{pz}=\overline{xz}$ and $\overline{py}=\overline{xy}$. In this case, $\linesq = \{ \overrightarrow{pq}, \overline{py}, \overline{pz}, \overline{yz} \}$ and by relabeling $x$ and $p$ we get $$\linesq = \{ \overrightarrow{xq}, \overline{xy}, \overline{xz}, \overline{yz} \}.$$ So, we have that $\overrightarrow{qx} = \overline{xz}$ and $wqx,qxw,yxq,xqy \in \betq$ by Lemma \ref{l:nouniquetriples}. Hence, $q\in \overline{xy}$ which implies that $q\notin \overline{yz}$. Thus, the sets $\{ q, y \}$ and $\{ q, z \}$ are symmetric and $\overline{qy} \in \{\overrightarrow{xq}, \overline{xy} \}$ and $\overline{qz} \in \{ \overrightarrow{xq}, \overline{xz} \}$. Since $\ell=\overrightarrow{xq}$ is not symmetric then $\overline{qy} = \overline{xy}$ and $\overline{qz}=\overline{xz}$. Now, since $\qq$ has no symmetric triples, then $\overrightarrow{wz}\neq \overrightarrow{zw}$. So, 
    \begin{equation*}
        \{ \overrightarrow{wz}, \overrightarrow{zw} \} = \{ \overline{wx}, \overline{wy} \} = \{  \overline{xz}, \overline{yz} \} \text{ and } \overline{qz} = \overline{qw} = \overline{xz} = \overline{xw}, \ \overline{wy} = \overline{yz}.
    \end{equation*}
    If $( \overrightarrow{wz}, \overrightarrow{zw} ) = ( \overline{xw}, \overline{yw} ) = ( \overline{xz}, \overline{yz} )$, then $yzw,zwy,xwz,wzx,qwz,wzq \in \betq$. We conclude that $z \notin \overrightarrow{xq}$ as otherwise, we get that $w\in \overrightarrow{xq}$. In fact, as $wzq,xwz,qwz \in \betq$ then $ \{ zxq,xzq,xqz \} \cap \betq = \emptyset$. The case is analogous to show  that $z \notin \overrightarrow{xq}$ when $\overrightarrow{wz} = \overline{wy} = \overline{yz}$. In this case,  since $zwx,zwq,xzw\in \betq$ then $\{ zxq,xzq,xqz \} \cap \betq = \emptyset$. Note that the previous argument holds for any $u \notin \overline{xy}$. Indeed, since $\ell$ is not defined by symmetric points we have that $\overline{ux}=\overline{xz}$ and $\overline{uy} = \overline{zy}$. So, $\linesq = \{ \overrightarrow{xq}, \overline{xy}, \overline{yu}, \overline{xu} \}$. This shows that $\overrightarrow{xq} \subseteq \overline{xy}$.

    Now, let $u \notin \overrightarrow{xq}$. By Lemma \ref{l:covering}, it follows that $u \in \overrightarrow{qx}$ and by Lemma \ref{l:nouniquetriples} we get that 
    \begin{equation}
        \label{eq:pffourl1}
        \betq(\{u,x,q\}) = \{ qxu, uqx \}.
    \end{equation}
    If $u \in \overline{xy}$, then $u \notin \overline{yz}$ by Lemma \ref{l:threeinternonempty}. So, $\overline{uy} = \overline{xy}$ and $\overline{uz} = \overline{xz}$. If $x$ and $u$ are symmetric, then $\overline{xu} \in \{\overline{xy},\overline{xz}\}$ which implies that $x,z,u$ or $x,y,u$ is a symmetric triple. The case is analogous if $q$ and $u$ are symmetric. Therefore, $$\{\overrightarrow{ux},\overrightarrow{xu}\}=\{\overline{xy},\overline{xz}\}=\{\overrightarrow{uq},\overrightarrow{qu}\}.$$ Recall that $yxq \in \betq$. If $ \{ yux, yqu \} \cap \betq \neq \emptyset$, then $ \{ uxq, xqu \} \cap  \betq \neq \emptyset$, a contradiction since $u \notin \overrightarrow{xq}$. Thus, $yux, yqu \notin \betq$ which implies that $\overrightarrow{ux} = \overrightarrow{qu} = \overline{xz}$. Also, recall that $w \in \overline{xz} \setminus \overline{xy}$. Thus, $quw, uxw \in \betq$  which implies $qux\in \betq$, a contradiction with (\ref{eq:pffourl1}). So, $u \notin \overline{xy}$ and $\overline{xy} = \ell$, a contradiction.

    Finally, consider the case $p \in \overline{yz}$. Then, $p \notin \overline{xy}$ so the sets $\{ p, x \}$ and $\{ p, y \}$ are symmetric such that $\overline{px}=\overline{xz}$ and $\overline{py}=\overline{yz}$. In this case, $\linesq = \{ \overrightarrow{pq}, \overline{xy}, \overline{px}, \overline{py} \},$ and by relabelling $p$ and $z$ we get
    $$\linesq = \{ \overrightarrow{zq}, \overline{xy}, \overline{xz}, \overline{yz} \}.$$ %%to check if the symmetry is true
    So, we have that $\overrightarrow{qz} = \overline{xz}$ and $wqz,qzw,yzq,zqy \in \betq$. Hence, $q \in \overline{zy}$ which implies that $q \notin \overline{xy}$. The sets $\{ q, x\}$ and $\{ q,y \}$ are symmetric and $\overline{qy}\in \{ \overrightarrow{zq}, \overline{yz} \}$ and $\overline{qx} \in \{ \overrightarrow{zq}, \overline{xz} \}$. Since $\ell = \overrightarrow{zq}$ is not symmetric then $\overline{qy} = \overline{yz}$ and $\overline{qx} = \overline{xz}$. Also, we have that the sets $\{ w, x \}$ and $\{ w,y \}$ are symmetric, so 
    \begin{equation*}
    \{ \overrightarrow{wq}, \overrightarrow{qw}\} = \{\overrightarrow{wz}, \overrightarrow{zw} \} = \{ \overline{xz}, \overline{yz} \} \text{ and } \overline{qx} = \overline{wx} = \overline{xz}, \  \overline{wy} = \overline{qy} = \overline{yz}.
    \end{equation*}
    If $\overrightarrow{qw}=\overline{xz}$, then $qwx\in \betq$ and then $zwx\in \betq$, since $qzw\in  \betq$. So, $\overrightarrow{zw}=\overline{xz}$ and $\overrightarrow{wz}=\overline{yz}.$ This implies that $ywz\in \betq$. As $wqz \in \betq$ we get that $yqz\in \betq$, a contradiction since $yzq \in \betq$. So, it must be that $\overrightarrow{qw} = \overline{yz}$ and $yqw \in \betq$. However, since $qzw \in \betq$ then $yqz \in \betq$, a contradiction since $y \notin \overrightarrow{qz} = \overline{xz}$.

\end{proof}

In the following sections, we present the characterizations of quasimetric spaces with 3 or 4 lines, none of which is universal. Before this, we establish some notation to present our results in these sections. For a labelled set $X = \{ x_1, \ldots, x_p \}$ of size $p$, we define the linear order $\ordx$ over $X$ where $x_i \ordx x_j$ if and only if $1 \leq i < j \leq p$.

\section{Quasimetric spaces with three lines}\label{s:three}

In this section, we classify quasimetric spaces with exactly three lines, none of which is universal. First, we build quasimetric spaces with three non-universal lines and $n$ points, for each $n\geq 4$. We then show that the betweennesses of all quasimetric spaces with three lines and no universal lines are isomorphic to one of those arising from our construction.

\begin{theorem}
\label{thm:existence}
    For every $n \geq 4$ and positive integers $p,q,r$ such that $p+q+r=n$, there is a quasimetric space $\qq$ of size $n$ satisfying the following:
    \begin{itemize}
        \item $\linesq=\{\ell_1,\ell_2,\ell_3\}$,
        \item $|\ell_1|=p+q$, $|\ell_2|=p+r$ and $|\ell_3|=q+r$.
    \end{itemize}
\end{theorem}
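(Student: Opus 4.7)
The theorem will be proved by explicit construction. Partition $Q = X \cup Y \cup Z$ with $X = \{x_1,\ldots,x_p\}$, $Y = \{y_1,\ldots,y_q\}$, $Z = \{z_1,\ldots,z_r\}$ disjoint and linearly ordered by $\ordx, \ordy, \ordz$. The three intended lines are $\ell_1 = X \cup Y$, $\ell_2 = X \cup Z$, and $\ell_3 = Y \cup Z$; their sizes are $p+q$, $p+r$, $q+r$ as required, and none equals $Q$ so none is universal.

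The construction assigns a directed geodesic ordering to each line and defines $\rho$ accordingly, with the key design feature that the shared class of any two lines is traversed in opposite directions by those two lines. This opposite-orientation principle is forced on us: for a pair $\{u,v\}$ in a shared class (say $X = \ell_1 \cap \ell_2$), the two directed lines $\overrightarrow{uv}$ and $\overrightarrow{vu}$ should equal $\ell_1$ and $\ell_2$ respectively, so that each pair inside $X$ contributes to generating both $\ell_1$ and $\ell_2$; if instead both were $\ell_1$, then $\ell_2$ would have to be generated only by pairs drawn from $X\times Z$ or $Z\times Z$, which either fails or produces a universal line. Concretely, I specify $\rho(u,v)$ by casework on the classes of $u$ and $v$, using intra-class unit distances and inter-class distances chosen so that for the directed geodesic on $\ell_i$ the equality $\rho(u,v) = \rho(u,w) + \rho(w,v)$ holds tightly when $u,w,v$ are consecutive along this geodesic and fails strictly otherwise. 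A large constant $M$ (say $M > p+q+r$) is assigned to pairs that do not lie consecutively on any directed geodesic so that triangle inequality is restored without introducing spurious betweennesses.

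The verification has three parts: (i)~$\rho$ is a quasimetric, which reduces to a case-by-case triangle inequality check using the size of $M$; (ii)~for every pair $u,v \in \ell_i$ and every $w \in \ell_i\setminus\{u,v\}$, the triple $\{u,v,w\}$ is collinear, which follows from the geodesic structure on $\ell_i$ since $w$ is between $u$ and $v$ or lies on an extension in the traversal of $\ell_i$; (iii)~for every triple $(x,y,z) \in X \times Y \times Z$, none of the six orderings lies in $\betq$, so no line contains members of all three classes. The main obstacle is item (iii): I must show that no equality of the form $\rho(a,c) = \rho(a,b) + \rho(b,c)$ arises when $a,b,c$ come from three distinct classes, simultaneously over all six orderings. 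This is achieved by the coordinated choice of the three geodesic orientations together with the penalty $M$, which rules out any cross-class decomposition. Once (iii) is in place, $\linesq = \{\ell_1,\ell_2,\ell_3\}$ follows: every line contains a pair whose two underlying points lie in at most two of $X,Y,Z$, and that pair already generates one of the three prescribed lines via the geodesic structure.
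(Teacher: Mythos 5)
Your overall plan coincides with the paper's: partition $Q$ into $X,Y,Z$, take the three lines to be the pairwise unions, and traverse each shared class in opposite directions in its two lines. The gap is in how you realize the distances. You propose to assign a flat large constant $M$ to every ordered pair that does not occur, in order, on one of the three directed geodesics. This breaks the construction. Take $x\in X$ and $y\in Y$, with the geodesic on $\ell_1=X\cup Y$ traversing $X$ before $Y$; the reversed pair $(y,x)$ lies in order on no geodesic, so $\rho(y,x)=M$. But $\overrightarrow{yx}$ must equal $\ell_1$, since $\ell_1$ is the only one of your three lines containing both $y$ and $x$. Now check that no third point $w$ of $\ell_1$ enters $\overrightarrow{yx}$: the condition $w\in[yx]$ reads $M=\rho(y,w)+\rho(w,x)$ and fails because one of the two summands is itself $M$ (for $w\in X\cup Y$ at least one of $(y,w),(w,x)$ is again a reversed pair) while the other is positive; the conditions $y\in[wx]$ and $x\in[yw]$ read $\rho(w,x)=\rho(w,y)+M$ and $\rho(y,w)=M+\rho(x,w)$ and fail because no distance exceeds $M$. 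Hence $\overrightarrow{yx}$ is a proper subset of $\ell_1$, i.e.\ an extra line, whenever $p+q\geq 3$ (and some such line exists for every admissible $p,q,r$ since $n\geq 4$). The paper avoids this by taking $\rho$ to be the shortest-path quasimetric of a strongly connected weighted digraph (three bidirected unit-weight paths joined cyclically by six arcs of weight $c$), so the ``reverse'' distances are not a flat penalty but telescope through the structure: for instance $\rho(y_j,x_i)=(q-j)+c+(i-1)$, which makes $y_jy_kx_i\in\betq$ for $y_j\ordy y_k$ and forces $\overrightarrow{y_jx_i}=\overrightarrow{x_iy_j}=\ell_1$. Any correct version of your casework has to reproduce exactly this telescoping, at which point you have rebuilt the digraph.

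A second, smaller issue: your step (ii) only establishes that every triple inside $\ell_i$ is collinear, which gives $\ell_i\subseteq\overrightarrow{uv}\cup\overrightarrow{vu}$ for $u,v\in\ell_i$; together with (iii) this yields $\overrightarrow{uv}\cup\overrightarrow{vu}=\ell_i$ but not $\overrightarrow{uv}=\ell_i$, and a priori the two directed lines could be distinct proper subsets covering $\ell_i$, producing more than three lines. To conclude $\lvert\linesq\rvert=3$ you must identify each directed line exactly: for $u,v$ in the same class, $\overrightarrow{uv}$ and $\overrightarrow{vu}$ must be the two different lines containing that class, and for a cross-class pair $u$ and $v$ must be symmetric with $\overrightarrow{uv}=\overrightarrow{vu}$ equal to their common line. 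This again requires the precise distance values rather than only the collinearity pattern.
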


\begin{proof}
    Fix $n \geq 4$ and let $p,q,r$ be positive integers such that $p+q+r = n$. We will represent the quasimetric space as a strongly connected weighted digraph and define a quasimetric using the weights of the arcs. Let $(X=\{ x_1, \ldots, x_p \}, A_X)$ be a bidirected path of size $p$, $(Y=\{ y_1, \ldots, y_q \}, A_Y)$ a bidirected path of size $q$ and $(Z=\{ z_1, \ldots, z_r \}, A_Z)$ a bidirected path of size $r$. We define the weighted digraph $(Q,A,w)$ of size $n$ as
    \begin{align*}
        Q &= X \cup Y \cup Z, \\
        A &= A_X \cup A_Y \cup A_Z \cup \{ (x_1, z_1), (z_1, y_q), (y_q, x_1), (x_p, y_1), (y_1, z_r), (z_r, x_p) \},
    \end{align*}
    and the weight function $w:A \rightarrow [0, \infty)$ is defined as
    \begin{equation*}
        w(a) =
        \begin{cases}
            1 &\text{ if } a \in A_X \cup A_Y \cup A_Z,\\
            c &\text{ otherwise,}  
        \end{cases}
    \end{equation*}
    for an arbitrary constant $c > \max \{ p+q, p+r, q+r \}-2$. Note that this digraph is strongly connected, that is, for any two vertices in $u,v \in Q$, there is a directed path starting at $u$ and ending at $v$. For any such path, the length is defined as the sum of the weights of the arcs in the path. Thus, the weighted digraph $(Q,A,w)$ defines a quasimetric $\rho: Q \rightarrow [0, \infty)$ where $\rho(u,v)$ is the minimum length of a path from $u$ to $v$. It is clear that $\qq = (Q,\rho)$ is a quasimetric space of size $n$.

    To show that $\lvert \linesq \rvert = 3$, we will look at the maximal geodesics of $\qq$. One such maximal geodesic is $x_1 \ldots x_p y_1 \ldots y_q$. To see this, let $x_i \in X$ and $y_j, y_k \in Y$ such that $y_j \ordy y_k$. We have $\rho (x_i, y_k) = p-i + c + k-1$ and 
    \begin{equation}
    \label{eq:betxy}
        \rho (x_i, y_j) + \rho(y_j, y_k) = p-i + c + j-1 + j-k = p-1+c+k-1 = \rho(x_i, y_k),
    \end{equation}
    which implies that $x_i y_j y_k \in \betq$. The argument is analogous to show $x_i x_j y_k \in \betq$ for $x_i \ordy x_j$. Moreover, the geodesic $x_1 \ldots x_p y_1 \ldots y_q$ is maximal since for any $z \in Z$ it holds that $\min \{ \rho(x,z), \rho(z, x) \} \geq c$ and $\min \{ \rho(y, z), \rho(z, y) \} \geq c$, for all $x \in X$ and $y \in Y$. In a similar way, we can prove that $y_q \ldots y_1 z_r \ldots z_1$ and $x_1 \ldots x_p z_r \ldots z_1$ are also maximal geodesics. Thus, for any $x \in X$, $y \in Y$ and $z \in Z$ it holds that $\betq (\{ x,y,z \}) = \emptyset$ and so, these are the only maximal geodesics. Note that these geodesics have size $p+q$, $q+r$ and $ p+r$, respectively.

    Finally, we will show that each geodesic corresponds to a line in $\linesq$. For this, let $\ell_1 := \{ x_1, \ldots, x_p,$ $y_1, \ldots, y_q \}$. First, we show that $\overrightarrow{x y} = \overrightarrow{y x} = \ell_1$ for $x \in X$ and $y \in Y$. By (\ref{eq:betxy}), we have that $\ell_1 \subseteq \overrightarrow{x y} \;   (\text{or }\overrightarrow{y x})$. Moreover, if there is a $z \in Z \cap \overrightarrow{x y} \; (\text{or } \overrightarrow{y x})$ then there is a triple in $\betq$ that contains one point of $X,Y$ and $Z$ each, which is not possible. So, $\ell_1 = \overrightarrow{x y} = \overrightarrow{y_j x_i}$. Through a similar argument we have that $\overrightarrow{x_i x_j} = \ell_1$ for all $x_i \ordx x_j$ and $\overrightarrow{y_i y_j} = \ell_1$ for all $y_i \ordy y_j$.

    Note that we can apply the previous arguments to show that $\ell_2 := \{ y_q, \ldots, y_1, z_r, \ldots, z_1 \} = \overrightarrow{y z} = \overrightarrow{z y}$ for $y \in Y$ and $ z\in Z$ and $\overrightarrow{y_j y_i} = \ell_2 = \overrightarrow{z_j z_i}$ for $ y_i \ordy y_j$ and $ z_i \ordz z_j$. The reasoning is analogous to show that $\ell_3 := \{ x_1, \ldots, x_p, z_1, \ldots, z_r \} = \overrightarrow{x z} = \overrightarrow{z x}$ for $x \in X$ and $z \in Z$ and $\overrightarrow{x_j x_i} = \ell_2 = \overrightarrow{z_i z_j}$ for $x_i \ordx x_j$ and $z_i \ordz z_j$. It follows that $\lvert \ell_1 \rvert = p+q$, $\lvert \ell_2 \rvert = q+r$ and $\lvert \ell_3 \rvert = p+r$.
\end{proof}

\begin{figure}
    \centering
    \begin{subfigure}[t]{.49\textwidth}
        \centering
        \includegraphics[scale=.57]{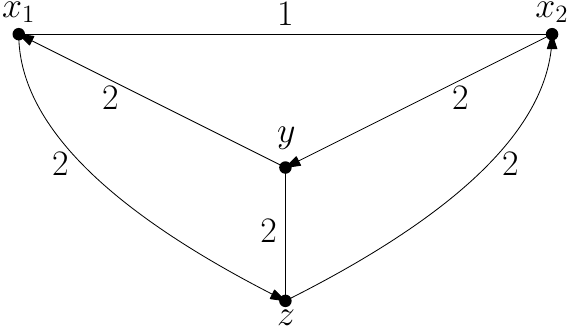}
        \caption{}
        \label{fig:small}
    \end{subfigure}
    \begin{subfigure}[t]{.49\textwidth}
        \centering
        \includegraphics[scale=.57]{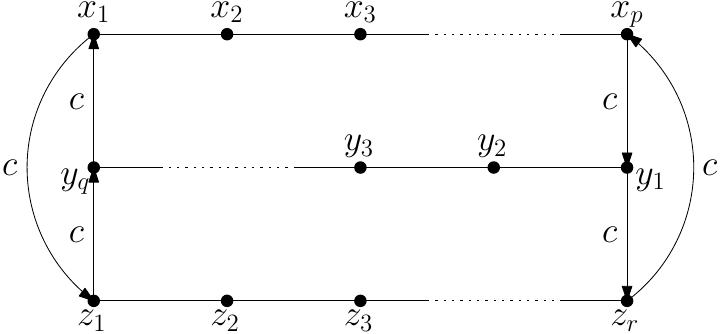}
        \caption{}
        \label{fig:three}
    \end{subfigure}
    \caption{ (a) A weighted strongly connected digraph representing a quasimetric space with four points and three lines, none of which are universal. The betweenness arising from this digraph is that presented in \cite{aramatzam} as the only betweenness arising from a quasimetric space that does not satisfy the Chen-Chv\'{a}tal Conjecture for quasimetric spaces. (b) The strongly connected weighted digraph representing the quasimetric space $\qq$ from Theorem \ref{thm:existence} where the undirected edges represented bidirected edges between two vertices and have weight 1. The remaining edges correspond to arcs of weight $c > \max \{ p+q, p+r, q+r \} - 2$. Theorem \ref{thm:uniquebet3} proves that any betweenness arising from a quasimetric space with exactly three lines, none of which are universal, is isomorphic to either $\betq_\qq$.}
    \label{fig:threelines}
\end{figure}

Let $p,q,r$ be positive integers. We denote by $\mathcal{C}^{p,q,r}$ the betweenness arising from the quasimetric space of size $n=p+q+r$ presented in the proof of Theorem \ref{thm:existence}. For simplicity, we will define certain sets,
\begin{equation}
\begin{alignedat}{1}
    \betq_V &= \{(v,v',v'')\in V^3\mid v\prec_V v'\prec_V v'' \lor v\succ_V v'\succ_V v''\},\\
    V^2_{\prec_V}W &= \{(v,v',w)\in V\times V\times W\vert v\prec_V v'\}, \\
    V^2_{\succ_V}W &= \{(v,v',w)\in V\times V\times W\vert v\succ_V v'\}, \\
    WV^2_{\prec_V} &= \{(w,v,v')\in W\times V\times V\vert v\prec_V v'\}, \\
    WV^2_{\succ_V} &= \{(w,v,v')\in W\times V\times V\vert v\succ_V v'\}, 
\end{alignedat}
\end{equation}
where $V,W\in \{X,Y,Z\}$.
Note that $\mathcal{C}^{p,q,r}$ is determined by the values $p,q$ and $r$ and so,

\begin{equation}
\label{eq:betthree}
\begin{alignedat}{4}
    \mathcal{C}^{p,q,r} = \; &\betq_X\ &&\cup\betq_Y &&\cup \betq_Z  &&\, \cup \\
    &X^2_{\prec_X}Y &&\cup YX^2_{\prec_X} &&\cup Y^2_{\prec_Y}X &&\cup XY^2_{\prec_Y} \, \cup \\
    &X^2_{\succ_X}Z &&\cup ZX^2_{\succ_X} &&\cup Z^2_{\prec_Z}X &&\cup XZ^2_{\prec_Z} \, \cup \\
    &Y^2_{\succ_Y}Z &&\cup ZY^2_{\succ_Y} &&\cup Z^2_{\succ_Z}Y &&\cup YZ^2_{\succ_Z},
\end{alignedat}
\end{equation}
where $|X|=p, |Y|=q$ and $|Z|=r$. Our next theorem states and proves that the betweennesses presented in (\ref{eq:betthree}) are the unique betweennesses, up to isomorphism, arising from quasimetric spaces with three lines, none of which is universal.

\begin{theorem}
\label{thm:uniquebet3}
    Let $\qq = (Q, \rho)$ be a quasimetric space of size $n\geq 4$ such that $\lvert \linesq \rvert = 3$ with no universal line. Then, there are integers $p,q,r$ such that $n=p+q+r$ and $\betq \cong \mathcal{C}^{p,q,r}$.
\end{theorem}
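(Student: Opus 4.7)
The plan is to identify the three lines of $\qq$ with the three ``edges'' of a tripartition of $Q$. Concretely, I would set $X=\ell_1\cap\ell_3$, $Y=\ell_1\cap\ell_2$ and $Z=\ell_2\cap\ell_3$, equip each block with the linear order coming from the direction of its within-block non-symmetric pairs, and then verify that the resulting betweenness matches $\mathcal{C}^{p,q,r}$ for $p=|X|$, $q=|Y|$, $r=|Z|$. For the partition step, every point lies in at least two lines (otherwise the sole line through that point is universal), and no point lies in all three lines: if $w$ were such a point then, for any non-collinear triple $x,y,z$, it would lie in the intersection $(\overrightarrow{xy}\cup\overrightarrow{yx})\cap(\overrightarrow{xz}\cup\overrightarrow{zx})\cap(\overrightarrow{yz}\cup\overrightarrow{zy})$, contradicting Lemma \ref{l:threeinternonempty}; such a non-collinear triple exists because otherwise no pair of points would be symmetric, contradicting Lemma \ref{l:nonsymmetric}. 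Non-emptiness of each of $X,Y,Z$ is forced, since $X=\emptyset$ would give $\ell_2=Y\cup Z=Q$.

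Next I would classify pairs. A cross-pair $(x,y)\in X\times Y$ has $\ell_1$ as the only line containing both endpoints, so $\overrightarrow{xy}=\overrightarrow{yx}=\ell_1$ and the pair is symmetric; analogously for $X\times Z$ (line $\ell_3$) and $Y\times Z$ (line $\ell_2$). For a within-block pair $(x,x')\in X^2$ the candidate lines are only $\ell_1$ and $\ell_3$; if the pair were symmetric with say $\overline{xx'}=\ell_1$, Lemma \ref{l:threeinternonempty} applied to $\{x,x',z\}$ for any $z\in Z$ would give a contradiction, since the three pair-line unions are $\ell_1,\ell_3,\ell_3$ and their intersection $\ell_1\cap\ell_3=X$ is non-empty. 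Hence within-block pairs are non-symmetric, with $\{\overrightarrow{xx'},\overrightarrow{x'x}\}=\{\ell_1,\ell_3\}$, and I can define $x\ordx x'\iff\overrightarrow{xx'}=\ell_1$ together with analogous orders on $Y,Z$ whose ``canonical'' line is chosen so that the orientations match the $\prec/\succ$ conventions of (\ref{eq:betthree}).

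Antisymmetry and totality of $\ordx$ are immediate, and transitivity follows from axiom (\ref{prop:2}) applied with a cross-point as pivot: given $x\ordx x'$ and $x'\ordx x''$, the remark following Lemma \ref{l:nouniquetriples} applied with $y\in\overrightarrow{xx'}\setminus\overrightarrow{x'x}$ and then with $y\in\overrightarrow{x'x''}\setminus\overrightarrow{x''x'}$ pins down $\betq(\{x,x',y\})=\{xx'y,yxx'\}$ and $\betq(\{x',x'',y\})=\{x'x''y,yx'x''\}$, after which (\ref{prop:2}) with $w=y$ yields $yxx'',xx'x''\in\betq$, giving both $x\ordx x''$ and the monotone triple in $\betq_X$. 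The remainder is a case analysis over triple types: three points in one block (the same idea iterated gives the two monotone triples), two in one block and one in another (the same remark pins down exactly two triples, oriented by the within-block $\prec$-relation), and one point from each of $X,Y,Z$ (non-collinear because the unique line through any two of them misses the third block, for instance $\ell_1=X\cup Y$ misses $Z$). The main obstacle is this final case analysis: the bookkeeping is delicate, and one must align the orientations of $\ordx,\ordy,\ordz$ with the asymmetric $\prec/\succ$ conventions of (\ref{eq:betthree}), which may require permuting the labels $\ell_1,\ell_2,\ell_3$ at the outset.
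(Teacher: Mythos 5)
Your proposal is correct and follows essentially the same route as the paper's proof: partition $Q$ into the three pairwise intersections of the lines (using Lemmas \ref{l:nonsymmetric} and \ref{l:threeinternonempty} to rule out a point on all three lines), order each block via a transitive-tournament argument based on the remark after Lemma \ref{l:nouniquetriples} together with property (\ref{prop:2}), and read off the betweenness blockwise. The only cosmetic difference is that you derive the tripartition directly from ``every point lies in exactly two lines,'' whereas the paper first fixes a maximum-size line and proves a particular pair symmetric (its Claim 9.1) before invoking the same lemmas.
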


\begin{proof}
    Let $\ell \in \linesq$ such that for any $\ell' \in \linesq$, we have $\lvert \ell' \rvert \leq \lvert \ell \rvert$. Note that such a line always exists, but it may not be unique. Let $z$ be a point such that $z \notin \ell$. Such a point always exists since $\ell$ is not universal. Let $x \in \ell$ and consider the line $\ell_x := \overrightarrow{xz}$. Since $z \notin \ell$, then $\ell \neq \ell_x$. Moreover, since $\lvert \ell_x \rvert \leq \lvert \ell \rvert$, then there is a point $y \in \ell \setminus \ell_x$. This implies that the line $\ell_y := \overrightarrow{yz} \notin \{ \ell, \ell_x \}$. It follows that $\linesq = \{ \ell, \ell_x, \ell_y \}$. Additionally, we have that $\overrightarrow{yz} = \overrightarrow{zy}$ since $y \notin \ell_x$ and $z \notin \ell$.

    \begin{claim}
    \label{clm:xzzx}
        The points $x$ and $z$ are symmetric.
    \end{claim}

    \begin{proof}
        Suppose, for a contradiction, that $\overrightarrow{zx} \in \{ \ell_y, \ell \}$. Since $z \notin \ell$, then $\overrightarrow{zx} = \ell_y$. This implies that $x \in \overline{yz}$. By Lemma \ref{l:nouniquetriples}, since $y \notin \ell_x$ then $zxy, yzx \in \betq$ since $y \notin \ell_x$. So, it follows that $\overrightarrow{xy} = \overrightarrow{yx} = \ell_y$, as $z \notin \ell$.

        Now, since $\lvert \ell_y \rvert \leq \lvert \ell \rvert$, then there is a point $w \in \ell \setminus \ell_y$. This yields that $\ell_x = \overrightarrow{wz} = \overrightarrow{zw}$. Now, as we had that $\overrightarrow{zx} = \ell_y$, then it must be that $wxz, xzw \in \betq$, by Lemma \ref{l:nouniquetriples}. So, $\overrightarrow{wx}= \overrightarrow{xw}= \ell_x$. Finally, the line $\overrightarrow{wy}$ must be equal to $\ell$ since $w \notin \ell_y$. So, $x \in \overrightarrow{wy}$ and so $y \in \overrightarrow{wx} \cup \overrightarrow{xw} = \ell_x$, a contradiction.
    \end{proof}

    As a consequence of Claim \ref{clm:xzzx}, we get that $\ell = \overrightarrow{xy} = \overrightarrow{yx}$. So, all lines are symmetric. By Lemma \ref{l:threeinternonempty}, the set $\ell \cap \ell_x \cap \ell_y$ is empty. Let $X = \ell \cap \ell_x$, $Y = \ell \cap \ell_y$ and $Z= \ell_x \cap \ell_y$. Note that none of these sets is empty since $x \in X$, $y \in Y$ and $z \in Z$. It is clear that $\{ X,Y,Z \}$ is a partition of $Q$ since every point must be in one line and $\ell \cap \ell_x \cap \ell_y=\emptyset$. Finally, let $p = \lvert X \rvert, q = \lvert Y \rvert$ and $r = \lvert Z \rvert$.

    \begin{claim}
        \label{clm:xorder}
        For every $W \in \{ X,Y,Z \}$ with $\lvert W \rvert \geq 2$ there is a linear order $\ordw$ over W such that for all $w \ordw w' \ordw w^*$ it holds that $\overrightarrow{w w'} = \overrightarrow{w w^*} = \overrightarrow{w' w^*} \neq \overrightarrow{w' w} = \overrightarrow{w^* w} = \overrightarrow{w^* w}$.
    \end{claim}

    \begin{proof}

        It suffices to show the statement for $W = X$ since the proof is analogous for the remaining cases. So, assume that $ \lvert X \rvert \geq 2$ and let $x, x' \in X$, $y \in Y$ and $z \in Z$. Since the set $\ell \cap \ell_x \cap \ell_y$ is empty, then the sets $\{ x,y \}$ and $\{ x',y \}$ are symmetric and $\overline{xy} = \ell = \overline{x'y}$. So, $y \in \overrightarrow{xx'} \cup \overrightarrow{x'x}$. The same reasoning applies to $x,x'$ and $z$ so, we have that $z \in \overrightarrow{xx'} \cup \overrightarrow{x'x}$. So, we conclude that $\lvert \{y,z\} \cap \overrightarrow{xx'} \rvert = \lvert \{y,z\} \cap \overrightarrow{x'x} \rvert = 1$.

        Now, consider the digraph $D = (X,A)$ where $(x,x') \in A$ if and only if $y \in \overrightarrow{xx'}$. Since ${y \in \overrightarrow{xx'} \vartriangle \overrightarrow{x'x}}$, the digraph $D$ is a tournament. To prove the claim, it suffices to show that $D$ is a transitive tournament since this would imply the existence of such a linear order $\ordx$ over $X$. So, let $(x,x'), (x',x^*) \in A$. Since $y \notin \overrightarrow{x'x}$ then $ xx'y, yxx' \in \betq$, by Lemma \ref{l:nouniquetriples}. By the same arguments, it follows that $x'x^* y, y x' x^* \in \betq$. So, applying (\ref{prop:2}) yields $yx x^* , x x^* y, xx'x^* \in \betq$. The case is analogous to show that $zx^*x, x^* x z, x^*x'x \in \betq$.
    \end{proof}

    By Claim \ref{clm:xorder}, we get
    \begin{alignat*}{3}
        X &= \{ x_1, \ldots, x_p \} &&\text{ such that } \ell = \overrightarrow{x_i x_j} \neq \overrightarrow{x_j x_i} = \ell_x &&\text{ for } x_i \ordx x_j, \\
        Y &= \{ y_1, \ldots, y_q \} &&\text{ such that } \ell_y = \overrightarrow{y_i y_j} \neq \overrightarrow{y_j y_i} = \ell &&\text{ for } y_i \ordy y_j, \\
        Z &= \{ z_1, \ldots, z_r \} &&\text{ such that } \ell_x = \overrightarrow{z_i z_j} \neq \overrightarrow{z_j z_i} = \ell_y &&\text{ for } z_i \ordz z_j,
    \end{alignat*}
    and also
    \begin{alignat*}{2}
        &\ell &&= \{x_1, \ldots, x_p, y_1, \ldots, y_q \}, \\
        &\ell_x &&= \{ x_1, \ldots, x_p, z_1, \ldots, z_r \}, \\
        &\ell_y &&= \{ y_1, \ldots, y_q, z_1, \ldots, z_r \}.
    \end{alignat*}
    It is clear that each line corresponds to a maximal geodesic. Namely, the line $\ell$ corresponds to $x_1 \ldots x_p y_1 \ldots y_q$, $\ell_x$ to $x_q \ldots x_1 z_1 \ldots z_r$ and $\ell_y$ to $y_q \ldots y_1 z_r \ldots z_1$. Note that these are the only maximal geodesics, since $\ell \cap \ell_x \cap \ell_y$ is empty. Thus, we have that $\betq \cong \mathcal{C}^{p,q,r}$.
\end{proof}
Let $p_3(n)$ denote the number of partitions of the integer $n$ into three parts.

\begin{corollary}
For all $n\geq 4$ there are $p_3(n)$ non isomorphic betweennesses arising from quasimetric spaces on $n$ points without universal lines and with exactly three lines.
\end{corollary}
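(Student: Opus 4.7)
The plan is to combine Theorems~\ref{thm:existence} and~\ref{thm:uniquebet3} with a counting argument. Theorem~\ref{thm:existence} produces, for every ordered triple $(p,q,r)$ of positive integers with $p+q+r=n$, a realization of the betweenness $\mathcal{C}^{p,q,r}$, while Theorem~\ref{thm:uniquebet3} shows that every betweenness of a quasimetric space on $n$ points with three non-universal lines is isomorphic to some $\mathcal{C}^{p,q,r}$. Hence the task reduces to deciding exactly when $\mathcal{C}^{p,q,r}\cong \mathcal{C}^{p',q',r'}$, and then counting the equivalence classes.

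First I would show that the isomorphism class of $\mathcal{C}^{p,q,r}$ depends only on the multiset $\{p,q,r\}$. The description~(\ref{eq:betthree}) is almost symmetric in $X,Y,Z$, but not quite: the construction has a fixed cyclic orientation visible in the 3-cycles $x_1\to z_1\to y_q\to x_1$ and $x_p\to y_1\to z_r\to x_p$ of Figure~\ref{fig:three}. For the transposition swapping $Y$ and $Z$, I would define a bijection that sends $Y\leftrightarrow Z$ (preserving their respective orders) and simultaneously reverses the linear order on $X$; an inspection of the four ``product'' blocks of~(\ref{eq:betthree}) attached to each pair of sets then shows this bijection identifies $\mathcal{C}^{p,q,r}$ with $\mathcal{C}^{p,r,q}$. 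Since transpositions generate $S_3$, every permutation of $(p,q,r)$ yields an isomorphic betweenness.

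For the converse, suppose $\mathcal{C}^{p,q,r}\cong \mathcal{C}^{p',q',r'}$. Since a line $\overrightarrow{ab}$ is defined purely in terms of the betweenness relation, any isomorphism preserves the multiset of line sizes; by Theorem~\ref{thm:existence} this multiset is $\{p+q,p+r,q+r\}$, so $\{p+q,p+r,q+r\}=\{p'+q',p'+r',q'+r'\}$. Using $p+q+r=n$, subtracting each line size from $n$ recovers $\{p,q,r\}=\{n-(q+r),\,n-(p+r),\,n-(p+q)\}$, hence $\{p,q,r\}=\{p',q',r'\}$. Combining the two directions places the isomorphism classes in bijection with unordered partitions of $n$ into three positive parts, giving the count $p_3(n)$.

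The main obstacle is the first step, the $S_3$-symmetry: because the orientation of the auxiliary digraph is reversed by a transposition of two of the sets $X,Y,Z$, the isomorphism cannot be a naive relabeling. Verifying that reversing the linear order on the third set precisely compensates for this flip in each of the four blocks of~(\ref{eq:betthree}) is routine but error-prone case-checking. Once the symmetry is confirmed, the remainder of the argument is a one-line bijection between multisets of line sizes and multisets of parts.
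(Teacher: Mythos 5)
Your proposal is correct and follows the route the paper intends: the corollary is left as an immediate consequence of Theorems~\ref{thm:existence} and~\ref{thm:uniquebet3}, and you correctly supply the two details the paper leaves implicit, namely that permuting $(p,q,r)$ (with a compensating order reversal on one path) preserves the isomorphism class, and that the multiset $\{p+q,p+r,q+r\}$ of line sizes, hence $\{p,q,r\}$, is an isomorphism invariant.
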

\section{Quasimetric spaces with four lines}
\label{s:four}

In this section, we characterize quasimetric spaces with four lines, none of which is universal. Throughout the section, let $\qq=(Q, \rho)$ be a quasimetric space of size $n \geq 4$ such that $\lvert \linesq \rvert = 4$, and no line is universal. We now present the analogous version of Theorem \ref{thm:existence} for this section. 

\begin{theorem}
    \label{thm:exisfour}
    For every $n \geq 4$ and positive integers $p,q,r$ such that $p+q+r+1=n$, there are two quasimetric spaces $\mathcal{Q}_1$ and $\mathcal{Q}_2$ of size $n$ whose betweennesses are not isomorphic and satisfying the following, for each $i=1,2$:
    \begin{itemize}
        \item $\mathcal{L}_{\mathcal{Q}_i}=\{\ell_1,\ell_2,\ell_3,\ell_4\}$,
        \item $|\ell_1|=n-1$, $|\ell_2|=p+1$,  $|\ell_3|=q+1$  and $|\ell_4|=r+1$. 
    \end{itemize} 
    \end{theorem}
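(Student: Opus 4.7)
The plan is to extend the construction of Theorem \ref{thm:existence}, exhibiting each $\mathcal{Q}_i$ as a weighted strongly connected digraph on the vertex set $Q = \{w\} \cup X \cup Y \cup Z$, where $X = \{x_1,\ldots,x_p\}$, $Y = \{y_1,\ldots,y_q\}$ and $Z = \{z_1,\ldots,z_r\}$ are bidirected paths with internal weight-$1$ edges, and $w$ is a single extra vertex. The target line structure is the same in both constructions: $\ell_1 = X\cup Y\cup Z$ of size $n-1$ (missing only $w$), together with $\ell_2 = \{w\}\cup X$, $\ell_3 = \{w\}\cup Y$ and $\ell_4 = \{w\}\cup Z$ of sizes $p+1, q+1, r+1$. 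The two digraphs will differ in how $w$ is attached to the three arms: roughly, in $\mathcal{Q}_1$ all four lines will be symmetric (so $\overrightarrow{ab}=\overrightarrow{ba}$ for any $a,b$ on a common line), while in $\mathcal{Q}_2$ the three lines through $w$ will be asymmetric, in the spirit of the three-line construction of Theorem \ref{thm:existence}.

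Concretely, both digraphs are specified by a small number of weight-$1$ arcs arranged so that a single forward traversal visits all of $X\cup Y\cup Z$ (producing the big geodesic underlying $\ell_1$) and each of $\ell_2, \ell_3, \ell_4$ is realised by a geodesic that uses $w$ as an endpoint and stays inside the corresponding arm. For $D_1$ the arcs incident to $w$ would be bidirected so that the three short lines become symmetric; for $D_2$ they would be oriented one-directionally as in Theorem \ref{thm:existence}. Weight-$c$ back-arcs with $c$ sufficiently large are then added as needed to guarantee strong connectivity without interfering with the chosen geodesics.

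The verification that each $\mathcal{Q}_i$ has exactly four lines of the claimed sizes mirrors the proof of Theorem \ref{thm:existence}: enumerate the maximal geodesics, show there are exactly four of them with vertex sets of sizes $n-1, p+1, q+1, r+1$, and deduce that the corresponding lines exhaust $\linesq$. The delicate point is to verify that any triple $(x,y,z)$ with $x\in X$, $y\in Y$, $z\in Z$ is collinear, so that $\ell_1$ really is a line of size $n-1$, while no triple of the form $(w,a,b)$ with $a, b$ in different arms is collinear, so that the three lines through $w$ stay separate and do not swallow extra points.

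For the non-isomorphism, it suffices to exhibit any invariant distinguishing $\betq_{\mathcal{Q}_1}$ from $\betq_{\mathcal{Q}_2}$: natural candidates are the number of symmetric lines in $\linesq$, or the total cardinality $|\betq|$, both of which should differ between the two constructions by design. The main obstacle I anticipate is the line-enumeration step, since inserting $w$ into the three-arm structure can easily create spurious collinear triples that merge $\ell_2, \ell_3, \ell_4$ with each other or with $\ell_1$; ruling these out requires a careful case analysis of the distances from $w$ to the endpoints of the three arms, in the spirit of but more involved than the corresponding step in Theorem \ref{thm:existence}. Once both betweennesses are explicitly described, the non-isomorphism should follow at once from the chosen invariant.
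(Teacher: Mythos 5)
Your overall framework (three bidirected-path arms $X,Y,Z$ plus one apex vertex, lines $\ell_1=X\cup Y\cup Z$ and $\ell_i=\{w\}\cup V$ for $V\in\{X,Y,Z\}$, non-isomorphism via an invariant of $\betq$) is the same as the paper's, and your $\mathcal{Q}_1$ is essentially the paper's first construction. But the mechanism you propose for making $\mathcal{Q}_2$ different --- ``the three lines through $w$ will be asymmetric'' --- cannot be realized in any space with the target line structure, so the proposal does not produce a second example. Indeed, for $a\in X$ the line $\overrightarrow{wa}$ contains both $w$ and $a$, and among the four prescribed lines the only one containing both is $\ell_2=\{w\}\cup X$ (since $\ell_1$ misses $w$, while $\ell_3$ and $\ell_4$ miss $a$); the same holds for $\overrightarrow{aw}$. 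Hence $\overrightarrow{wa}=\overrightarrow{aw}=\ell_2$ forcibly, $w$ is symmetric with every other point, and all four lines are symmetric lines in \emph{every} realization. This also kills your first candidate invariant (number of symmetric lines): it equals $4$ for both spaces no matter what.

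The paper's two constructions instead differ in the betweenness \emph{inside} $\ell_1$: in $\qq_1$ the three arms are arranged cyclically, so that $x_1\ldots x_py_1\ldots y_qz_1\ldots z_r$, $y_1\ldots y_qz_1\ldots z_rx_1\ldots x_p$ and $z_1\ldots z_rx_1\ldots x_py_1\ldots y_q$ are all maximal geodesics (contributing the triples $XYZ\cup YZX\cup ZXY$), whereas in $\qq_2$ the arms sit on a single two-way path $Z$--$Y$--$X$, contributing only $XYZ\cup ZYX$; the attachment of $u$ is identical in both. This makes $\lvert\betq_1\rvert=\lvert\betq_2\rvert+pqr$, which is the invariant actually used (your second candidate). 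To repair your proof you would need to replace the ``asymmetric short lines'' idea with some such modification of the big geodesic structure, and then still carry out the distance computations you deferred (the choice of weights on the arcs joining the arms and on the arcs at $u$ is delicate precisely so that no triple $\{u,a,b\}$ with $a,b$ in different arms becomes collinear while every triple meeting all three arms does).
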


\begin{proof}
     Fix $n \geq 4$ and let $p,q,r$ be positive integers such that $p+q+r+1= n$. We will represent the quasimetric spaces as strongly connected weighted digraphs $D_1$ and $D_2$ and define the distance between $u$ and $v$ as the minimum length of a path from $u$ to $v$. Let $(X=\{ x_1, \ldots, x_p \}, A_X)$ be a bidirected path of size $p$, $(Y=\{ y_1, \ldots, y_q \}, A_Y)$ a bidirected path of size $q$, $(Z=\{ z_1, \ldots, z_r \}, A_Z)$ a bidirected path of size $r$ and $u$ a point such that $u \notin X \cup Y \cup Z$. For each $i=1,2$, we define the weighted digraph $D_i=(Q,A_i,w_i)$ of size $n$ as
    \begin{align*}
        Q = \ &X \cup Y \cup Z \cup \{ u \}, \\
        A_1 = \ &A_X \cup A_Y \cup A_Z \cup \\
        &\{ (x_1, u), (u, x_p), (y_1, u),(u, y_q), (z_1, u), (u, z_r), (x_p, y_1), (y_q, z_1), (z_r, x_1) \}\\
        A_2 = \ &A_X \cup A_Y \cup A_Z \cup \\
        &\{ (x_1, u), (u, x_p), (y_1, u),(u, y_q), (z_1, u), (u, z_r), (x_p, y_1), (y_q, z_1), (z_r, y_1), (y_q,x_1) \}.
    \end{align*}
    The weight function $w_1:A_1 \rightarrow [0, \infty)$ is defined as 
    \begin{equation*}
        w_1(a) =
        \begin{cases}
            1 &\text{ if } a \in A_X \cup A_Y \cup A_Z,\\
            c &\text{ if } a \in \{  (x_p, y_1), (y_q, z_1), (z_r, x_1) \}, \\
            2c &\text{ otherwise,} 
        \end{cases}
    \end{equation*}
    for an arbitrary constant $c$ such that $2c > p+q+r-3$. The weight function $w_2:A_2 \rightarrow [0, \infty)$ is defined as 
    \begin{equation*}
        w_2(a) =
        \begin{cases}
            1 &\text{ if } a \in A_X \cup A_Y \cup A_Z,\\
            p &\text{ if } a=(x_p, y_1) \\
            r &\text{ if } a=(z_r, y_1) \\
            q+p &\text{ if } a=(y_q,x_1) \\
            q+r &\text{ if } a=(y_q, z_1) \\
            p+r &\text{ if } a=(u, y_q)\\
            c &\text{ otherwise,} 
        \end{cases}
    \end{equation*}
    where $c=p+q+r-2$. Let $\betq_1$ and $\betq_2$ be the corresponding betweennesses defined by the quasimetric spaces $\qq_1$ and $\qq_2$ associated to $D_1$ and $D_2$, respectively.
    
    To show that $\lvert \mathcal{L}_{\mathcal{Q}_i} \rvert = 4$, we will focus on the maximal geodesics of $\qq_i$ for $i \in \{ 1,2 \}$ (such as we did in the proof of Theorem \ref{thm:existence}). First, let $i=1$. It is clear that one such maximal geodesic of $\qq_1$ is $x_1 \ldots x_p y_1 \ldots y_q z_1 \ldots z_r$. In this case, the sequences $z_1 \ldots z_r x_1 \ldots x_p y_1 \ldots y_q$ and $y_1 \ldots y_q z_1 \ldots z_r x_1 \ldots x_p$ are also maximal geodesics. Also, for every $V \in \{ X,Y,Z \}$, the set $V \cup \{ u \}$ is geodetic. The following claim presents the respective geodesics.
    \begin{claim}
    \label{clm:geodesics}
        For any $V \in \{ X,Y,Z \}$ and $s= \lvert V \rvert$, the sequences $v_s\ldots v_1 z$ and $z v_s \ldots v_1$ are maximal geodesics.
    \end{claim}

    \begin{proof}
        For any $V \in \{ X,Y,Z \}$, the sequence $v_s \ldots v_1$ is a geodesic. So, it is left to show that $v_j v_i u, u v_j v_i \in \betq_1$ for $ v_i \ordv v_j$. It suffices to show the statement for $V=X$ since the arguments are analogous for the remaining cases. By definition of the weight function $w_1$, for $x_i \ordx x_j$, it holds that
        \begin{align*}
            \rho (x_j , x_i) + \rho(x_i, u) &= j - i + i - 1 + 2c = 2c + j - 1 = \rho(x_j, u) \text{ and }\\
            \rho (u, x_j) + \rho(x_j, x_i) &= 2c + p - j + j - i = 2c + p - i = \rho(u, x_i).
        \end{align*}
        So, the sequences $x_p\ldots x_1 u$ and $u x_p \ldots x_1$ are geodesics. Finally, maximality of these geodesics follows from the fact that $\max \{ \rho(u,v), \rho(v,u) \} \geq 2c$ for any $v \in X \cup Y \cup Z$.
    \end{proof}

    By Claim \ref{clm:geodesics}, the sets
    \begin{align*}
        \ell_1 &:= \{ x_1, \ldots, x_p, y_1, \ldots, y_q, z_1, \ldots, z_r \}, \\
        \ell_2 &:= \{ x_1, \ldots, x_p, u \}, \\
        \ell_3 &:= \{ y_1, \ldots, y_q, u \}, \\
        \ell_4 &:= \{ z_1, \ldots, z_r, u \},
    \end{align*}
    are geodetic and correspond to the maximal geodesics of $\qq_1$. It remains to show that ${\linesq}_1 = \{ \ell_1, \ell_2, \ell_3, \ell_4 \}$. However, by Claim \ref{clm:geodesics} and the fact that $x_1 \ldots x_p y_1 \ldots y_q z_1 \ldots z_r$ is a maximal geodesic, we have that
    \begin{equation*}
    \overrightarrow{ab} = 
    \begin{cases}
        \ell_1 \text{ if } a \ordv b \text { for some } V \in \{ X,Y,Z \} \text{ or there is no } V \in \{X,Y,Z \} \text{ such that } a,b \in V, \\
        \ell_2 \text{ if } b \ordx a \text{ or } u \in \{ a,b \}, \\
        \ell_3 \text{ if } b \ordy a \text{ or } u \in \{ a,b \}, \\
        \ell_4 \text{ if } b \ordz a \text{ or } u \in \{ a,b \}.
    \end{cases}
    \end{equation*}
    This concludes the proof for $\qq_1$.

    Now, let $i=2$. So, the sequence $z_1 \ldots z_r y_1 \ldots y_q x_1 \ldots x_p$ is a  maximal geodesic. Also, for every $V \in \{ X,Y,Z \}$, the set $V \cup \{ u \}$ is geodetic. The following claim presents the respective geodesics.
    \begin{claim}
    \label{clm:geodesics2}
        For any $V \in \{ X,Y,Z \}$ and $s= \lvert V \rvert$, the sequences $v_s\ldots v_1 z$ and $z v_s \ldots v_1$ are maximal geodesics.
    \end{claim}

    \begin{proof}
        For any $V \in \{ X,Y,Z \}$, the sequence $v_s \ldots v_1$ is a geodesic. So, it is left to show that $v_j v_i u, u v_j v_i \in \betq_2$ for $ v_i \ordv v_j$. It suffices to show the statement for $V=X$ and $V=Y$, since for $V=Z$,  the argument is analogous to that for $V=X$. By definition of the weight function $w_2$, for $x_i \ordx x_j$, it holds that
        \begin{align*}
            \rho (x_j , x_i) + \rho(x_i, u) &= j - i + i - 1 + c = c + j - 1 = \rho(x_j, u) \text{ and }\\
            \rho (u, x_j) + \rho(x_j, x_i) &= c + p - j + j - i = c + p - i = \rho(u, x_i).
        \end{align*}
        So, the sequences $x_p\ldots x_1 u$ and $u x_p \ldots x_1$ are geodesics. Finally, maximality of these geodesics follows from the fact that $\rho(v,u)+\rho(u,w)\geq 2c>\rho(v,w)$ for any $v,w \in X \cup Z$ and $\rho(v,u)+\rho(u,w)\geq c+p+r>\rho(v,w)$ for any $v\in X\cup Z, w\in Y$. 
        
        By definition of the weight function $w_2$, for $y_i \ordx y_j$, it holds that
        \begin{align*}
            \rho (y_j , y_i) + \rho(y_i, u) &= j - i + i - 1 + c = c + j - 1 = \rho(y_j, u) \text{ and }\\
            \rho (u, y_j) + \rho(y_j, y_i) &= p + r + q - j + j - i = c - i = \rho(u, y_i).
        \end{align*}
        So, the sequences $y_q\ldots y_1 u$ and $u y_q \ldots y_1$ are geodesics. Finally, maximality of these geodesics follows from the fact that $\rho(v,u)+\rho(u,w)\geq c+p+r>\rho(v,w)$ for any $v \in X \cup Z$, $w\in Y$, and $\rho(v,u)+\rho(u,w)\geq 2c>\rho(v,w)$ for any $w\in X\cup Z, v\in Y$.
    \end{proof}

    By Claim \ref{clm:geodesics2}, the sets
    \begin{align*}
        \ell_1 &:= \{ x_1, \ldots, x_p, y_1, \ldots, y_q, z_1, \ldots, z_r \}, \\
        \ell_2 &:= \{ x_1, \ldots, x_p, u \}, \\
        \ell_3 &:= \{ y_1, \ldots, y_q, u \}, \\
        \ell_4 &:= \{ z_1, \ldots, z_r, u \},
    \end{align*}
    are geodetic and correspond to the maximal geodesics of $\qq_2$. It remains to show that $\mathcal{L}_{\qq_2} = \{ \ell_1, \ell_2, \ell_3, \ell_4 \}$. However, by Claim \ref{clm:geodesics2} and the fact that $x_1 \ldots x_p y_1 \ldots y_q z_1 \ldots z_r$ is a maximal geodesic, we have that $z_1 \ldots z_r y_1 \ldots y_q x_1 \ldots x_p$ is also a maximal geodesic.
    \begin{equation*}
    \overrightarrow{ab} = 
    \begin{cases}
        \ell_1 \text{ if } a \ordv b \text { for some } V \in \{ X,Y,Z \} \text{ or there is no } V \in \{X,Y,Z \} \text{ such that } a,b \in V, \\
        \ell_2 \text{ if } b \ordx a \text{ or } u \in \{ a,b \}, \\
        \ell_3 \text{ if } b \ordy a \text{ or } u \in \{ a,b \}, \\
        \ell_4 \text{ if } b \ordz a \text{ or } u \in \{ a,b \}.
    \end{cases}
    \end{equation*}
    This concludes the proof for $\qq_2$. Finally, note that two isomorphic betweennesses must have the same cardinality.  Since $\lvert \betq_1 \rvert = \lvert \betq_2 \rvert + pqr$, we conclude that $\betq_1$ and $\betq_2$ are not isomorphic.
\end{proof}

\begin{figure}
    \centering
    \begin{minipage}[t]{.49\textwidth}
        \centering
        \includegraphics[scale=.57]{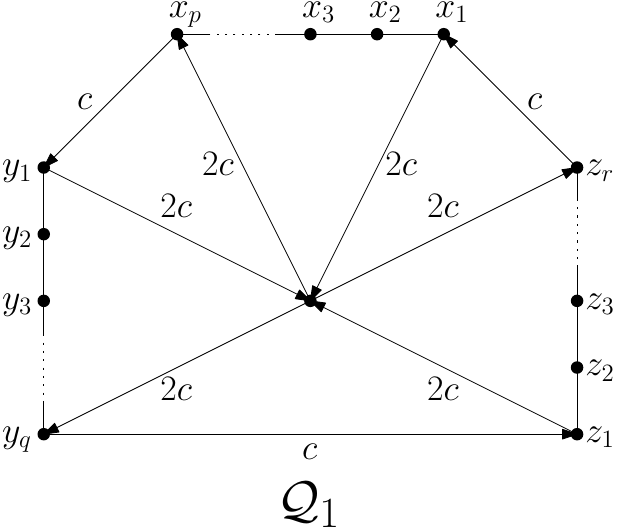}
        \label{fig:one}
    \end{minipage}
    \begin{minipage}[t]{.49\textwidth}
        \centering
        \includegraphics[scale=.57]{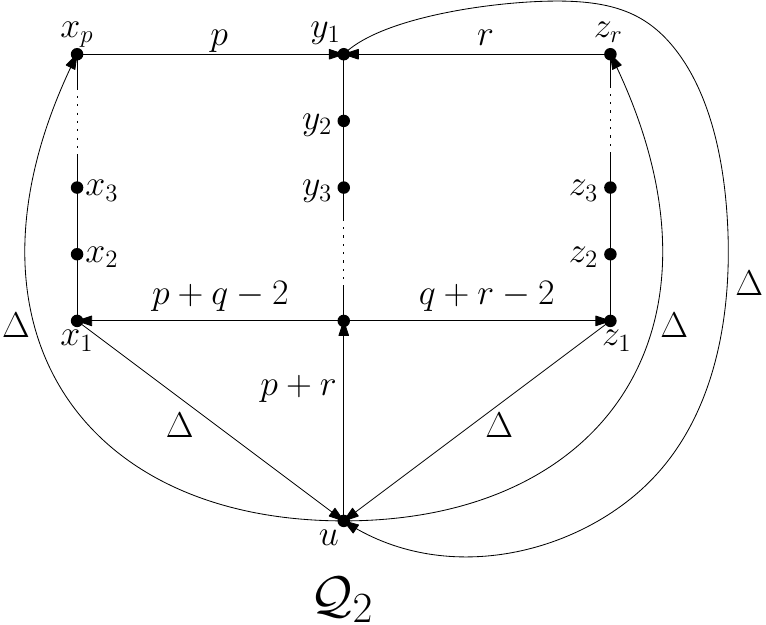}
        \label{fig:two}
    \end{minipage}
    \caption{The strongly connected weighted digraph representing the quasimetric spaces $\qq_1$ and $\qq_2$ from Theorem \ref{thm:exisfour} where the undirected edges represented bidirected edges between two vertices and have weight 1. The remaining edges correspond to arcs of weight $c > (p+q+r-3)/2$ and $\Delta = p+q+r-2$. Theorem \ref{t:fourthreesymmetric} proves that any betweenness arising from a quasimetric space with exactly four lines, none of which are universal, is isomorphic to either $\mathcal{D}^{p,q,r}_1$ or $\mathcal{D}^{p,q,r}_2$.}
    \label{fig:enter-label}
\end{figure}

\begin{proposition}
\label{prop:threesym}
    Let $\qq$ be a quasimetric space with four lines, none of which is universal, and let $\ell \in \linesq$ be a symmetric line such that $\ell = \overline{xy} = \overline{yz} = \overline{xz}$, for three distinct points $x$, $y$ and $z$. Then, $\lvert \ell \rvert = n-1$.
\end{proposition}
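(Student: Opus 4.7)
The plan is to argue by contradiction: suppose $\lvert Q \setminus \ell \rvert \geq 2$, take distinct points $u,v \notin \ell$, and derive a contradiction using the machinery of Section~2. The starting observation is that since $x,y,z$ are pairwise symmetric on $\ell$, the remark preceding Lemma~\ref{l:covering} implies that for any $w \notin \ell$ the triples $\{w,x,y\}$, $\{w,x,z\}$, $\{w,y,z\}$ are all non-collinear.

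First I would show that $u$ is symmetric with each of $x,y,z$. If, say, $u$ and $x$ were not symmetric, Lemma~\ref{l:covering} would give $Q = \overrightarrow{ux} \cup \overrightarrow{xu}$; but $\{u,x,y\}$ not collinear means $y \notin \overrightarrow{ux} \cup \overrightarrow{xu}$, a contradiction. Hence the symmetric lines $\overline{ux}, \overline{uy}, \overline{uz}$ all exist, and they are pairwise distinct (if $\overline{ux} = \overline{uy}$, then $y \in \overline{ux}$ would make $\{u,x,y\}$ collinear). None of them equals $\ell$ since $u \notin \ell$, so we obtain $\linesq = \{\ell, \overline{ux}, \overline{uy}, \overline{uz}\}$. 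The same reasoning with $v$ gives symmetric lines $\overline{vx}, \overline{vy}, \overline{vz}$; and since $x$ lies only on $\ell$ and $\overline{ux}$ among the four lines (because $x \in \overline{uy}$ or $x \in \overline{uz}$ would force collinearity of a non-collinear triple), one concludes $\overline{vx} = \overline{ux}$, and analogously $\overline{vy} = \overline{uy}$, $\overline{vz} = \overline{uz}$.

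The next step treats the pair $(u,v)$. If $u,v$ were not symmetric, Lemma~\ref{l:covering} would give $Q = \overrightarrow{uv} \cup \overrightarrow{vu}$; but $x,y,z$ each lie on a unique line among $\{\overline{ux}, \overline{uy}, \overline{uz}\}$, so $\{\overrightarrow{uv}, \overrightarrow{vu}\}$ would have to contain three distinct lines, impossible. Therefore $u,v$ are symmetric with $\overline{uv} \in \{\overline{ux}, \overline{uy}, \overline{uz}\}$; without loss of generality $\overline{uv} = \overline{ux}$. Now I would apply Lemma~\ref{l:threeinternonempty} to the triple $\{u,v,y\}$: $y \notin \overline{uv} = \overline{ux}$ (since $x \notin \overline{uy}$ in our configuration, equivalently $y \notin \overline{ux}$), and the triple intersection $\overline{uv} \cap \overline{uy} \cap \overline{vy} = \overline{ux} \cap \overline{uy}$ contains $u$, hence is non-empty. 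The lemma then yields either a universal line or $\lvert \linesq \rvert \geq 5$, contradicting our hypotheses.

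The only real obstacle is bookkeeping: one must check carefully that the three lines $\overline{ux}, \overline{uy}, \overline{uz}$ are genuinely distinct and exhaust $\linesq \setminus \{\ell\}$, and that the analogous lines coming from $v$ coincide with them in the correct way, so that the final application of Lemma~\ref{l:threeinternonempty} actually has the intersection contain a point (namely $u$). Once $\lvert Q \setminus \ell \rvert \leq 1$ is established, the conclusion $\lvert \ell \rvert = n-1$ follows because $\ell$ is not universal, so $\lvert \ell \rvert \leq n-1$, while the contradiction above gives $\lvert \ell \rvert \geq n-1$.
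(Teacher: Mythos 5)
Your proof is correct and follows essentially the same strategy as the paper's: assume two points $u,v$ lie off $\ell$, deduce from Lemma~\ref{l:covering} that each of $x,y,z$ is symmetric with $u$ (and with $v$), so that $\linesq=\{\ell,\overline{ux},\overline{uy},\overline{uz}\}$ with $\overline{vx}=\overline{ux}$, etc., and then derive a contradiction from the lines through $u$ and $v$. The only divergence is the endgame: the paper observes directly that each $a\in\{x,y,z\}$ is collinear with $u,v$, hence lies in $\overrightarrow{uv}\cup\overrightarrow{vu}$, and pigeonholes two of $x,y,z$ onto a single line through $u$ and $v$; you instead split on whether $u,v$ are symmetric, handling the non-symmetric case by the same pigeonhole and the symmetric case by applying Lemma~\ref{l:threeinternonempty} to $\{u,v,y\}$ --- both routes are valid and of comparable length.
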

%LEMMA, not Proposition

\begin{proof}
    Suppose, for a contradiction, that $\lvert \ell \rvert \leq n-2$ and let $u,v \in V \setminus \ell$. Let $a,b \in \{ x,y,z \}$. Since $v \notin \ell = \overline{ab}$, then $b \notin \overrightarrow{av} \cup \overrightarrow{va}$. So, any point $a \in \{ x,y,z \}$ is symmetric with $v$ and, so $\linesq = \{ \ell, \overline{xv}, \overline{yv}, \overline{zv} \}$. The same reasoning can be applied to $u$, which yields that $\overline{xv} = \overline{xu}$, $\overline{yv} = \overline{yu}$ and $\overline{zv} = \overline{zu}$. So, for all $a \in \{ x,y,z \}$, we have that $a \in \overline{uv} \cup \overline{vu}$. By pigeonhole, one of the lines $\overline{uv}, \overline{vu}$ contains two elements of $\{ x,y,z \}$. However, this is a contradiction since the only lines in $\linesq$ containing $u$ and $v$ contain at most one element of $\{ x,y,z \}$.
\end{proof}

Let $p,q,r$ be positive integers. For $i \in \{ 1 ,2 \}$, we denote by $\mathcal{D}_i^{p,q,r}$ the betweenness arising from the quasimetric space of size $n=p+q+r+1$ presented in the proof of Theorem \ref{thm:exisfour}. Note that $\mathcal{D}_i^{p,q,r}$ is determined by $p,q$ and $r$. So,
\begin{equation*}
\begin{alignedat}{6}
    \mathcal{D}_1^{p,q,r} = \; &\betq_X&&\cup\betq_Y&&\cup \betq_Z&&\cup  XYZ && \cup YZX &&\cup ZXY  \\
    &\cup X^2_{\prec_X}Y &&\cup X^2_{\prec_X}Z &&\cup Y^2_{\prec_Y}X &&\cup Y^2_{\prec_Y}Z&&\cup Z^2_{\prec_Z}X &&\cup Z^2_{\prec_Z}Y  \\ 
    &\cup YX^2_{\prec_X} &&\cup ZX^2_{\prec_X} &&\cup XY^2_{\prec_Y} &&\cup ZY^2_{\prec_Y}&&\cup XZ^2_{\prec_Z} &&\cup YZ^2_{\prec_Z}  \\ 
     &\cup uX^2_{\succ_X} &&\cup X^2_{\succ_X} u &&\cup u Y^2_{\succ_Y} && \cup Y^2_{\succ_Y}u &&\cup uZ^2_{\succ_Z}&&\cup Z^2_{\succ_Z}u ,  \\
    \mathcal{D}_2^{p,q,r} = \; &\betq_X&&\cup\betq_Y&&\cup \betq_Z&&\cup  XYZ && \cup ZYX  \\
    &\cup X^2_{\prec_X}Y &&\cup X^2_{\prec_X}Z &&\cup Y^2_{\prec_Y}X &&\cup Y^2_{\prec_Y}Z&&\cup Z^2_{\prec_Z}X &&\cup Z^2_{\prec_Z}Y  \\ 
    &\cup YX^2_{\prec_X} &&\cup ZX^2_{\prec_X} &&\cup XY^2_{\prec_Y} &&\cup ZY^2_{\prec_Y}&&\cup XZ^2_{\prec_Z} &&\cup YZ^2_{\prec_Z}  \\ 
     &\cup uX^2_{\succ_X} &&\cup X^2_{\succ_X} u &&\cup u Y^2_{\succ_Y} && \cup Y^2_{\succ_Y}u &&\cup uZ^2_{\succ_Z}&&\cup Z^2_{\succ_Z}u ,  
\end{alignedat}
\end{equation*}
where $uV^2_{\succ_V}=\{(u,v,v')\in \{u\}\times V\times V\vert v\succ v'\}$ and $V^2_{\succ_V} u=\{(v,v',u)\in V\times V\times \{u\}\vert v\succ v'\}$ with $V\in \{X,Y,Z\}$, and $UVW=U\times V\times W$, with $\{U,W,V\}=\{X,Y,Z\}$.
\begin{theorem}
\label{t:fourthreesymmetric}
    Let $\qq = (Q,\rho)$ be a quasimetric space of size $n$ such that $\lvert \linesq \rvert = 4$ with no universal line. Then, there are integers $p,q,r$ such that $n=p+q+r+1$ and $\betq \cong \mathcal{D}_1^{p,q,r}$ or $\betq \cong \mathcal{D}_2^{p,q,r}$. 
\end{theorem}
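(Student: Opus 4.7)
My plan is to reduce $\qq$ to a standardized configuration anchored by a large symmetric line, and then pin down the remaining freedom in the betweenness via a propagation argument. By Proposition \ref{p:nonthreesymmetric} (for $n \geq 5$; the base case $n = 4$ is finite and handled by direct inspection), $\qq$ admits a symmetric triple $\{x, y, z\}$, whose common line $\ell_1 := \overline{xy} = \overline{yz} = \overline{xz}$ must then satisfy $|\ell_1| = n - 1$ by Proposition \ref{prop:threesym}. Let $u$ be the unique point of $Q \setminus \ell_1$ and write the other three lines as $\ell_2, \ell_3, \ell_4$. Each of them contains $u$: otherwise some $\ell_i \subsetneq \ell_1$, and a defining pair $a, b \in \ell_i$ with $\overrightarrow{ab} = \ell_i$ must have $\overrightarrow{ba} = \ell_1$ (the only other line through $\{a, b\}$), whence Lemma \ref{l:covering} yields $Q = \ell_i \cup \ell_1$, contradicting $u \notin \ell_i \cup \ell_1$.

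Setting $V_i := \ell_i \setminus \{u\} \subseteq \ell_1$, the family $\{V_2, V_3, V_4\}$ partitions $\ell_1$: covering is immediate from $\overrightarrow{uv} \in \{\ell_2, \ell_3, \ell_4\}$ for any $v \in \ell_1$, while a hypothetical $v \in V_i \cap V_j$ with $i \neq j$ combined with $u$ and a witness in the third $V_k$ triggers Lemma \ref{l:threeinternonempty}. Renaming $X := V_2$, $Y := V_3$, $Z := V_4$, I then show each $\ell_i$ is symmetric: a non-symmetric pair $\{u, v\} \subseteq \ell_i$ would by Lemma \ref{l:covering} give $Q = \overrightarrow{uv} \cup \overrightarrow{vu}$ as a union of two lines through $u$, and pigeonhole applied to $\{x, y, z\}$ would then force two of them into a single line through $u$, contradicting $\overline{xy} = \overline{yz} = \overline{xz} = \ell_1$. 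Each of $X, Y, Z$ inherits a linear order $\ordx, \ordy, \ordz$ via the tournament argument of Claim \ref{clm:xorder}: orient $v \to v'$ in $V$ according to the direction of the betweenness triple involving $u$, obtain pairwise comparability from Lemma \ref{l:nouniquetriples}, and derive transitivity from property (\ref{prop:2}). All triples of $\betq$ using at most two of the parts $X, Y, Z$ are thereby determined, and they match the corresponding triples of both $\mathcal{D}_1^{p,q,r}$ and $\mathcal{D}_2^{p,q,r}$.

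The main obstacle is the trichromatic triples $(a, b, c) \in X \times Y \times Z$. For each such unordered triple, Lemma \ref{l:nouniquetriples} forces $|\betq(\{a, b, c\})| \geq 2$, and axiom (\ref{prop:1}) leaves exactly two admissible configurations: the cyclic one $\{(a,b,c), (b,c,a), (c,a,b)\}$ (as in $\mathcal{D}_1$) and the reflected one $\{(a,b,c), (c,b,a)\}$ (as in $\mathcal{D}_2$). The crux of the argument is uniformity of this choice. Given $x_1 \ordx x_2$ in $X$ and $y \in Y, z \in Z$, the orders already force $(y, x_1, x_2), (z, x_1, x_2) \in \betq$; combining these with either $(y, z, x_1) \in \betq$ (the cyclic case at $\{x_1, y, z\}$) or $(z, y, x_1) \in \betq$ (the reflected case) via axiom (\ref{prop:2}) deduces $(y, z, x_2) \in \betq$ or $(z, y, x_2) \in \betq$ respectively, which forces $\{x_2, y, z\}$ into the same configuration as $\{x_1, y, z\}$. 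Symmetric applications of (\ref{prop:2}) propagate uniformity across the $Y$- and $Z$-axes, so by connectedness of $X \times Y \times Z$ under single-coordinate moves the cyclic-versus-reflected choice is globally constant, yielding $\betq \cong \mathcal{D}_1^{p,q,r}$ or $\betq \cong \mathcal{D}_2^{p,q,r}$ with $p = |X|, q = |Y|, r = |Z|$.
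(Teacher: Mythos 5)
Your proposal shares the paper's skeleton up to and including the linear orders: the paper likewise starts from Proposition \ref{p:nonthreesymmetric} and Proposition \ref{prop:threesym} to obtain the line $\ell$ of size $n-1$ and the point $u$, partitions $\ell$ into $X=\ell\cap\ell_x$, $Y=\ell\cap\ell_y$, $Z=\ell\cap\ell_z$, and orders each part via a transitive tournament (Claims \ref{clm:orderallfour} and \ref{clm:largegeofour}). Where you genuinely diverge is the endgame on the trichromatic triples: the paper returns to the quasimetric itself, realizing $\ell$ by maximal geodesics and comparing $\rho(z_r,x_1)$ with $\rho(z_r,y_1)$ to decide between the cyclic and reflected patterns, whereas you stay inside the relational structure and propagate the local configuration across $X\times Y\times Z$ using property (\ref{prop:2}). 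Your propagation is sound: from $yx_1x_2,zx_1x_2\in\betq$ (which Claim \ref{clm:largegeofour} supplies for $x_1\ordx x_2$) one gets $yzx_1\in\betq\iff yzx_2\in\betq$, and analogous instances along the $Y$- and $Z$-coordinates show that membership of each of the six ordered patterns is constant over $X\times Y\times Z$. This buys a purely combinatorial argument that never re-enters the metric; the paper's geodesic argument is shorter but leans on the claim that exactly one of two orderings of $\ell$ is a maximal geodesic.

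Two places need tightening. First, the assertion that axiom (\ref{prop:1}) ``leaves exactly two admissible configurations'' is false as stated: it also admits, for instance, $\{(a,b,c),(b,c,a)\}$, the reverse cyclic triple, and the reflected pairs centred at $a$ or at $c$. You should first observe that for $a\in X$ and $b\in Y$ the only line containing both $a$ and $b$ is $\ell$ (by the emptiness of the triple intersections), so every cross-part pair is symmetric with $\overline{ab}=\ell$; this yields $\betq(\{a,b,c\})\neq\emptyset$, eliminates the non-antipodal two-element configurations, and leaves an orientation/centre ambiguity that must be absorbed by an explicit relabelling of $X,Y,Z$, since $\mathcal{D}_2^{p,q,r}$ has $Y$ in the distinguished middle position. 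Second, your argument that each $\ell_i$ contains $u$ does not work as written: at that stage nothing justifies the parenthetical ``the only other line through $\{a,b\}$'', and if $\overrightarrow{ab}=\overrightarrow{ba}=\ell_i$ then Lemma \ref{l:covering} does not apply at all. The fact is true and should be obtained as in the paper: $u\notin\ell=\overline{ab}$ for $a,b\in\{x,y,z\}$ forces $b\notin\overrightarrow{ua}\cup\overrightarrow{au}$, so each pair $\{u,a\}$ is symmetric and the three lines $\overline{ua}$ are pairwise distinct, which already accounts for all four lines and shows each of $\ell_2,\ell_3,\ell_4$ contains $u$.
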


\begin{proof} 
    From Proposition \ref{p:nonthreesymmetric} we know that there are $x,y,z \in Q$ and $\ell \in \linesq$ such that $\ell = \overline{xy} = \overline{yz} = \overline{xz}$. It follows from Proposition \ref{prop:threesym} that $\lvert \ell \rvert = n-1$, so let $u$ be the unique point in $Q \setminus \ell$. For any two distinct $a,b \in \{ x,y,z \}$, we have $b \notin \overrightarrow{ua} \cup \overrightarrow{au}$ which implies that $\{ a,b,u \}$ is a symmetric set and $\overline{ub} \neq \overline{ua}$. So, for $a \in \{ x,y,z \}$, write $\ell_a = \overline{ua}$ and $\linesq = \{ \ell, \ell_x, \ell_y, \ell_z \}$. By Lemma \ref{l:covering}, for any distinct $a,b \in \{x,y,z \}$, the set $\ell \cap \ell_a \cap \ell_b$ is empty.
    
    Now, let $X = \ell \cap \ell_x$, $Y = \ell \cap \ell_y$ and $Z = \ell \cap \ell_z$ and note that these sets are nonempty. The following claim is analogous to Claim \ref{clm:xorder}.

    \begin{claim}
    \label{clm:orderallfour}
        Let $V \in \{ X,Y,Z \}$ (resp. $v \in \{ x,y,z \}$). If $ \lvert V \rvert \geq 2$, then there is a linear order $\ordv$ over $V$ such that for all $ v' \ordv v^*$ it holds that $\overrightarrow{v' v^*} = \ell$ and $\overrightarrow{v^* v'} = \ell_v$.
    \end{claim}

    \begin{proof}
        It suffices to show that the statement holds for $V=X$ (resp. $v=x$), as the arguments are analogous for the remaining cases. So, assume $\lvert X \rvert \geq 2$ and let $x' \in X \setminus \{x\}$. Since $x' \in \ell_x$, then $u \in \overrightarrow{xx'} \cup \overrightarrow{x'x}$. Also, since $x' \in \ell$ then $y,z \in \overrightarrow{xx'} \cup \overrightarrow{x'x}$. Given that there is no line containing $x,y$ and $u$ (or $x,z$ and $u$), we must have that $\{ \overrightarrow{xx'}, \overrightarrow{x'x} \} = \{ \ell, \ell_x \}$ and $X = \overrightarrow{xx'} \cap \overrightarrow{x'x}$. So, 
        \begin{equation}
        \label{propty:xx'}
            u \in \overrightarrow{xx'} \Leftrightarrow y,z \in \overrightarrow{x'x}.
        \end{equation}

        By (\ref{propty:xx'}), we have $u \in \overrightarrow{xx'}$ if and only if $u \notin \overrightarrow{x'x}$. So, if $u \in \overrightarrow{xx'}$, then $uxx', xx'u \in \betq$. On the other hand, if $u \in \overrightarrow{x'x}$ then $ux'x, x'xu \in \betq$. Now, for any $x^* \in X \setminus \{ x,x' \}$ note that $\{ \overrightarrow{xx^*}, \overrightarrow{x^*x} \} = \{ \overrightarrow{x'x^*}, \overrightarrow{x^*x'} \} = \{ \overrightarrow{xx'}, \overrightarrow{x'x} \} = \{ \ell, \ell_x \}$. Analogous to (\ref{propty:xx'}), we have
        \begin{equation}
        \label{propty:x*x'}
            u \in \overrightarrow{x'x^*} \Leftrightarrow y,z \in \overrightarrow{x^*x'}.
        \end{equation}
        By (\ref{propty:x*x'}), we have that $u \in \overrightarrow{x'x^*}$ if and only if $u \notin \overrightarrow{x^*x'}$. In the same way as before, either $ux'x^*, x'x^*u \in \betq$ or $ux^*x', x^*x'u \in \betq$ hold.

        Let $D = (X,A)$ be a digraph where $(x',x^*) \in A$ if and only if $u \in \overrightarrow{x'x^*}$. Since $u \in \overrightarrow{x'x^*} \vartriangle \overrightarrow{x^*x'}$, the digraph $D$ is a tournament. To prove our claim, it suffices to show that $D$ is a transitive tournament since this would imply the existence of such an order in $X$. So, let $(x', x^*), (x^*,x^+) \in A$. The fact that $u \in \overrightarrow{x' x^*} \setminus \overrightarrow{x' x^*}$ implies $u x^* x', x^* u x', x^*x' u, x' u x^* \notin \betq$ and $u x' x^*, x' x^*u \in \betq$. In the same way, it follows that $u x^* x^+, x^* x^+ u \in \betq$. Using (\ref{prop:2}) gives $u x' x^+ , x' x^* x^+ \in \betq$, so $(x', x^+) \in A$. The reverse linear order of the vertices of $D$ satisfies our claim.
    \end{proof}

    For the rest of the proof, we write $X = \{x_1, \ldots, x_p \}$, $Y= \{y_1, \ldots, y_q \}$ and $Z = \{z_1, \ldots, z_r \}$ for $p = \lvert X \rvert$, $ q=\lvert Y \rvert$ and $r = \lvert Z \rvert$, with respect to the order obtained from Claim \ref{clm:orderallfour}.

    \begin{claim}
    \label{clm:largegeofour}
        Let $V,W \in \{ X,Y,Z \}$ with $\lvert V \rvert = s$ and $\lvert W \rvert =t$ such that $\{ s,t \} \neq \{ 1 \}$. Then, for $v_i, v_j \in V$ with $v_i \ordv v_j$ and $w \in W$ it holds that $v_i v_j w, w v_i v_j \in \betq$. Moreover, the points $v$ and $w$ are symmetric, for any $v \in V$ and $w \in W$.
    \end{claim}

    \begin{proof}    
        It follows from Claim \ref{clm:orderallfour} that $w \in \overrightarrow{v_i v_j} = \ell$ and $w \notin \overrightarrow{v_j v_i} = \ell_x$, so $w v_j v_i, v_j w v_i, v_j v_i w \notin \betq$. Since $\lvert \linesq \rvert = 4$, then by Lemma \ref{l:nouniquetriples}, it must be that $v_i v_j w, w v_i v_j \in \betq$ and $v_i w v_j \notin \betq$. Now, for $v \in V$ and $w \in W$ the lines $\overrightarrow{vw}$ and $\overrightarrow{wv}$ satisfy that $V \cup W \subset \overrightarrow{vw} \cap \overrightarrow{wv}$. Since $\ell$ is the only line satisfying $V \cup W \subset \ell$, then $v$ and $w$ are symmetric and $\ell = \overline{vw}$.
    \end{proof}

    By Claim \ref{clm:largegeofour}, exactly one of the sequences $x_1 \ldots x_p y_1 \ldots y_q z_1 \ldots z_r$ or $y_1 \ldots y_q x_1 \ldots x_p z_1 \ldots z_r$ is a maximal geodesic. Without loss of generality, we may assume that $x_1 \ldots x_p y_1 \ldots y_q z_1 \ldots z_r$ is a maximal geodesic, otherwise relabel the points of $X$ and $Y$ accordingly. 

    First, we consider the case when $\rho (z_r,x_1) < \rho (z_r,y_1)$. Then, the sequences $y_1 \ldots y_q z_1 \ldots z_r x_1 \ldots x_p$ and $z_1 \ldots z_r x_1 \ldots x_p y_1 \ldots y_q$ are also maximal geodesics. Moreover, these are the only maximal geodesics of size $n-1$. Hence, additional to the triples given in Claim \ref{clm:largegeofour}, we have that $xyz, yzx, zxy\in \betq$ for $x \in X$, $y \in Y$ and $z \in Z$. Furthermore, for any $V \in \{X,Y,Z \}$ and $s= \lvert V \rvert$ the sequences $u v_s \ldots v_1$ and $v_s \ldots v_1 u$ are maximal geodesics. So, we conclude that for $a,b \in Q$
    \begin{equation*}
        \overrightarrow{ab} = 
        \begin{cases}
            \ell_x &\text{ if } b \ordx a \text{ or } \{ a,b \} \cap X \neq \emptyset \text{ and } u \in \{ a,b \}, \\
            \ell_y &\text{ if } b \ordy a \text{ or } \{ a,b \} \cap Y \neq \emptyset \text{ and } u \in \{ a,b \}, \\
            \ell_z &\text{ if } b \ordz a \text{ or } \{ a,b \} \cap Z \neq \emptyset \text{ and } u \in \{ a,b \}, \\
            \ell &\text{ otherwise. }
        \end{cases}
    \end{equation*}
    and also $\betq \cong \mathcal{D}_1^{p,q,r}$.

    Now, consider the case $\rho (z_r,x_1) \geq \rho (z_r,y_1)$. Note that $\rho (z_r,x_1) \neq \rho (z_r,y_1)$, as otherwise we get the contradiction $y_1\notin \overrightarrow{z_rx_1}$, since $y_1 x_1 z_r \in \betq$. So, it follows that $\rho (z_r,x_1) > \rho (z_r,y_1)$. Since $x_1 \in \overrightarrow{z_ry_1}$ and $\rho (x_1,z_r) > \rho(x_1,y_1)$, then $z_1 \ldots z_r y_1 \ldots y_q x_1 \ldots x_p$ is also a maximal geodesic with $n-1$ points. Moreover, there is no other maximal geodesic with $n-1$ points. Hence, additional to the triples in $\betq$ given in Claim \ref{clm:largegeofour} we also have that $xyz, zyx\in \betq$ for $x\in X$, $y\in Y$ and $z\in Z$. So, $\betq \cong \mathcal{D}_2^{p,q,r}$. 
\end{proof}
\begin{corollary}
For all $n\geq 5$ there are $2p_3(n-1)$ non isomorphic betweennesses arising from quasimetric spaces on $n$ points without universal lines and with exactly four lines.
\end{corollary}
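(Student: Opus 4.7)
The plan is to derive the corollary from Theorem \ref{thm:exisfour} and Theorem \ref{t:fourthreesymmetric}. Existence of the relevant betweennesses is guaranteed by Theorem \ref{thm:exisfour}, which constructs, for each ordered triple $(p,q,r)$ of positive integers with $p+q+r=n-1$, two non-isomorphic betweennesses $\mathcal{D}_1^{p,q,r}$ and $\mathcal{D}_2^{p,q,r}$ of quasimetric spaces on $n$ points with exactly four non-universal lines. Uniqueness is guaranteed by Theorem \ref{t:fourthreesymmetric}, which shows that every such betweenness is isomorphic to some $\mathcal{D}_i^{p,q,r}$. The remaining task is to count isomorphism classes in the family $\{\mathcal{D}_i^{p,q,r}\}$.

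First, the type index $i$ is invariant: a direct count of the union defining $\mathcal{D}_i^{p,q,r}$ yields $|\mathcal{D}_1^{p,q,r}|-|\mathcal{D}_2^{p,q,r}|=pqr>0$, so no $\mathcal{D}_1^{p,q,r}$ is isomorphic to any $\mathcal{D}_2^{p',q',r'}$. Next, the multiset of the four line sizes $\{n-1,p+1,q+1,r+1\}$ is an isomorphism invariant, so isomorphic betweennesses share the same unordered partition $\{p,q,r\}$. The remaining step is to verify the converse: each unordered partition of $n-1$ into three positive parts yields exactly one isomorphism class per type. This is shown by exhibiting explicit size-preserving bijections $Q\to Q'$ that send the defining triples of $\mathcal{D}_i^{p,q,r}$ to those of $\mathcal{D}_i^{p',q',r'}$ whenever $\{p,q,r\}=\{p',q',r'\}$. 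Cyclic relabelings $X\to Y\to Z\to X$ realize $\mathcal{D}_1^{p,q,r}\cong\mathcal{D}_1^{q,r,p}$, and the involution $X\leftrightarrow Z$ realizes $\mathcal{D}_2^{p,q,r}\cong\mathcal{D}_2^{r,q,p}$; the remaining orderings are handled by combining set swaps with reversals of the internal orders $\prec_V$ so as to restore the pattern of $\prec_V$ versus $\succ_V$ occurrences in the defining union.

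Once these identifications are established, the counting is immediate: each of the $p_3(n-1)$ partitions of $n-1$ contributes exactly two isomorphism classes, giving a total of $2p_3(n-1)$. The main obstacle lies in verifying that the constructed relabelings preserve every type of triple in the union defining $\mathcal{D}_i^{p,q,r}$, particularly for $\mathcal{D}_1^{p,q,r}$, where the cyclic triples $XYZ,YZX,ZXY$ induce a distinguished orientation that must be reconciled with odd permutations of the parts via carefully chosen combinations of label swaps and order reversals, and similarly for $\mathcal{D}_2^{p,q,r}$, whose pair $XYZ,ZYX$ singles out the middle set and forces analogous bookkeeping when passing between orderings that move this middle position.
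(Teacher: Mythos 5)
Your overall strategy (existence from Theorem \ref{thm:exisfour}, classification from Theorem \ref{t:fourthreesymmetric}, then a count of isomorphism classes among the $\mathcal{D}_i^{p,q,r}$) is the natural one, and the invariants in your first half are sound: the cardinality gap $pqr$ separates the two types, and the multiset of line sizes forces $\{p,q,r\}=\{p',q',r'\}$. The gap is in the converse step. The claim that every reordering of $(p,q,r)$ can be realized by ``combining set swaps with reversals of the internal orders $\prec_V$'' fails, for exactly the reason you flag as the ``main obstacle'': the cross-block triples $XYZ\cup YZX\cup ZXY$ (resp.\ $XYZ\cup ZYX$) do not involve the orders $\prec_V$ at all, so reversing internal orders cannot change which cyclic orientation (resp.\ which block sits in the middle position) they encode; and independently, reversing the order on a block $V$ turns $uV^2_{\succ_V}$ into $uV^2_{\prec_V}$, which is not in the relation, so such reversals are not available as isomorphisms in the first place. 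Since the blocks are intersections of lines and lines are determined by the betweenness, any isomorphism must carry blocks to blocks of equal size and must preserve each internal linear order (which is pinned down by the triples through $u$); hence the only isomorphisms among the $\mathcal{D}_i^{p,q,r}$ are the cyclic relabelings for $i=1$ and the swap $X\leftrightarrow Z$ for $i=2$.

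Concretely, for $n=5$ the relations $\mathcal{D}_2^{1,2,1}$ and $\mathcal{D}_2^{2,1,1}$ both have $10$ triples but are not isomorphic: the multiset recording, for each point, the number of triples in which it occurs as the middle entry is $\{5,5,0,0,0\}$ for the former and $\{4,3,3,0,0\}$ for the latter, and this multiset is an isomorphism invariant of a ternary relational structure. Likewise $\mathcal{D}_1^{1,2,3}\not\cong\mathcal{D}_1^{1,3,2}$, since the triples with one point in each block record a cyclic orientation of three blocks of pairwise distinct (hence individually invariant) sizes. So your identifications would yield not $2p_3(n-1)$ classes per partition but rather the number of cyclic orderings of the parts plus the number of choices of a distinguished middle part; for $n=5$ this already gives $3$ rather than $2$. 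To recover the stated count you would need isomorphisms that do not respect the block structure (impossible, by the line-size argument) or a restriction of Theorem \ref{t:fourthreesymmetric} to canonical orderings of $(p,q,r)$; neither is available, so this counting step, and with it the derivation of $2p_3(n-1)$, needs to be revisited.
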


\section{Conclusion}
In this work, we have shown that the minimum number of lines in a quasi-metric space with $n \geq 4$ points and without universal lines is 3, and that this bound is tight, for all $n \geq 4$. This shows a structural distinction between metric and quasimetric spaces without universal lines in term of their betweennesses. Moreover, we have classified  the betweennesses of those quasimetric spaces achieving this bound. The same has been done for quasimetric spaces on $n \geq 5$ points without universal lines and exactly four lines. A natural question arising from our results is: Which are the betweennesses arising from quasimetric spaces on at least  six points, without universal lines and with exactly five lines? Can these spaces have no symmetric pairs?

In order to answer these questions, we think that it is worth to better understand the role of the symmetric sets in quasimetric spaces without universal lines. For instance, Lemma \ref{l:nonsymmetric} shows that all quasimetric spaces with 3 or 4 lines (none of which is universal) on $n \geq 5$ points have a symmetric pair. In the same way, Proposition \ref{p:nonthreesymmetric} shows that all quasimetric spaces with exactly 4 lines (none of which are universal) have a symmetric triple. These two results, together with the constructions from Theorems \ref{thm:existence} and \ref{thm:exisfour}, show the role of symmetric sets in the construction of quasimetric spaces with few lines. It is also important to note that these quasimetric spaces can be partitioned into simple metric spaces: the betweenneses of the metric spaces induced by $X, Y$ and $Z$ are isomorphic to that of a set of collinear points of the Euclidean plane.

\section*{Acknowledgements}

This work was carried out while the author Guillermo Gamboa Quintero was a participant in the CMM-PhD Visiting Program 2023 at the Center for Mathematical Modeling of the Universidad de Chile, supported by ANID Basal program FB210005.

\printbibliography

\end{document}